\newcommand{\Set}[1]{\ensuremath{ \left\{ #1 \right\} }}
\newcommand{\set}[1]{\ensuremath{ \{ #1 \} }}
\newcommand{\R}{\mathbb{R}}
\newcommand*{\epi}{{\rm epi}\,}
\newcommand*{\hypo}{{\rm hypo}\,}
\newcommand*{\gr}{{\rm gr}\,}
\newcommand*{\cl}{{\rm cl}\,}
\newcommand*{\co}{{\rm co}\,}
\newcommand*{\inte}{{\rm int}\,}
\begin{document}

\title{Complete Duality for Quasiconvex and Convex Set-Valued Functions}
%\tnotetext[t1]{We thank \textcolor{red}{BLABLABLA} for helpful comments and fruitful discussions.}
%\ArXiV{XXX}
%\thanksColleagues{1st Version: XXX}

\author[a,1]{Samuel Drapeau}
\author[b,2]{Andreas H. Hamel}
\author[c,3]{Michael Kupper}

\address[a]{Shanghai Jiao Tong University, SAIF/CAFR and Mathematics Department, 211 West Huaihai Road, Shanghai, P.R. China 200030}
\address[b]{Free University of Bolzano, Piazzetta dell'Universit\`a 1, 39031 Brunico, Italy}
\address[b]{Konstanz University, Universit\"atsstra\ss e 10, 78464, Germany}

\eMail[1]{sdrapeau@saif.sjtu.edu.cn}
\eMail[2]{andreas.hamel@unibz.it}
\eMail[3]{kupper@uni-konstanz.de}

%\myThanks[t]{Funding: \textcolor{red}{Andreas }}

\date{\today}
\abstract{
  This paper provides a unique dual representation of set-valued lower semi-continuous quasiconvex and convex functions.
  The results are based on a duality result for increasing set-valued functions.
}
\keyWords{Set-Valued Functions, Quasiconvexity, Dual Representation, Increasing Functions, Fenchel-Moreau}
\keyAMSClassification{49N15, 46A20}
\maketitle
\section*{Introduction}
\addcontentsline{toc}{section}{Introduction}
\markboth{\uppercase{Introduction}}{\uppercase{Introduction}}

Motivated by vector optimization problems, \citet{hamel09, Loehne11Book, HamelLoehne13JOTA-OF} among others, recently developed a duality theory for set-valued functions.
One of their key results is a Fenchel-Moreau type representation for convex functions with closed convex images.
However, in contrast to the scalar case where the convex conjugate is unique within the class of convex proper lower semi-continuous functions, they do not provide uniqueness for the dual representation.
On the other hand, not only for mathematical reasons, \citet{marinacci03} stressed that the uniqueness of the dual representation is of great importance in decision theory.
This applies in particular to the interpretation of preferences such as the uncertainty averse preferences as discussed in \citep{marinacci03}.

A duality, which uniquely identifies primal and dual functions within some given classes, is called complete.
For instance, the Fenchel-Moreau theorem states a complete duality between lower semi-continuous convex and proper functions on a locally convex topological vector space and its dual space by means of the Fenchel-Legendre convex conjugate, which actually is an automorphism.
It has been shown by \citet{drapeau01} that lower semi-continuous, quasiconvex, real-valued functions are in complete duality with the class of so-called maximal risk functions.
This result is based on a complete duality between increasing functions providing a one-to-one relation between the increasing closed convex lower level sets and the class of maximal risk functions.
For the existence of dual representation of quasiconvex functions we refer to \citet{penot01,penot02} and for a similar complete duality in the evenly quasiconvex case to \citet{marinacci04} and the references therein.

The focus of this paper is to study complete duality results for convex and quasiconvex set-valued functions.
To derive the uniqueness in the robust representation of a quasiconvex function its images have to be closed and monotone under some specific lattice operations.
In case of convex functions, the images are even required to be topologically closed and the order cone of the image set to have non-empty interior.
Our first main result, Theorem \ref{thm:onetoone}, states a complete duality between increasing set-valued functions.
In contrast to the scalar case \citep{drapeau01}, such complete duality is more difficult due to the fact that, unlike the canonical ordering in $\mathbb{R}$, the preorders we consider are no longer total.
Based on this first complete duality, our second main result, Theorem \ref{thm:robrep}, shows the complete duality for lower level-closed quasiconvex set-valued functions on a locally convex topological vector space $X$ in terms of the unique representation
\begin{equation*}%\label{main:equation}\tag{$\ast$}
	f(x):=\sup_{x^\ast}R(x^\ast,\langle x^\ast,x\rangle)
\end{equation*}
where $R$ is a maximal risk function and the supremum is taken over the dual space $X^\ast$.
In the convex case, the risk function $R$ has additional structure which leads to a uniquely specified Fenchel-Moreau type representation in our third main result, Theorem \ref{thm:robrepconv}.
The results strongly rely on our complete duality results for increasing left- and right-continuous set-valued functions.

Applications of the theory developed in the present work may be found in several areas.
\citet{Shephard70Book} presented a theoretical framework for cost and production functions and defined ``convex structures'' which are nothing else than quasi-convex set-valued functions.
In \cite[Chapter 11]{Shephard70Book}, a duality theory for support functions of such quasi-convex functions on finite-dimensional spaces is developed.
Here we extend this approach in a general set-valued framework.
Further, \citet{JouiniMeddebTouzi04FS, HamelHeyde10SIFIN, HamelHeydeRudloff11MAFE} developed a theory of convex set-valued risk measures which are appropriate instruments for risk evaluation in markets with transaction costs.
Thus, a synthesis of \citep{drapeau01} and the results of the present work is highly desirable and would extend the convex case treated in the above references to the more general quasi-convex one and yield a variety of new set-valued risk assessment indices.
Finally, in vector optimization and multi-criteria decision making, so called cone-quasiconvex functions were introduced by \citet{DinhTheLuc87JMAA}, studied, for example, by \citet{BenoistBorweinPopovici03PAMS} and extended to set-valued functions in \citet{BenoistPopovici03MMOR}.

The paper is organised as follows.
After fixing some preliminary notations, we address in Section \ref{sec:01} the complete lattices of monotone sets, introduce the algebraical notion of monotone closure of sets and their complete lattice property, and finally the definition and properties of increasing, convex or quasiconvex functions.
Section \ref{sec:02} is dedicated to the duality between left and right-continuous increasing set-valued functions.
Our main result -- a complete duality for lower level-closed quasiconvex functions -- is the subject of Section \ref{sec:03}.
In the preparation of the proof, the notion of maximal risk and minimal penalty function and their one-to-one relation is provided.
Section \ref{sec:04} addresses the particular case of convex functions: departing from the existence result in \citep{hamel09}, we provide the characterisation of the uniqueness of this Fenchel-Moreau type representation.

\subsection*{Notation}

Let $Z$ be a vector space and $\mathcal{P}(Z)$ its power set, that is, the set of all subsets including the empty set $\emptyset$.
A set $K\subseteq Z$ is a cone if $\lambda z \in K$ for every $\lambda>0$ and $z \in K$.
The Minkowski sum or difference for subsets of $Z$ is defined as $A+B:=\set{x+y\colon x\in A\text{ and }y \in B}$ and $A-B:=\set{x-y\colon  x \in A\text{ and }y \in B}$ for $A,B \in \mathcal{P}(Z)$.
In particular $A\pm \emptyset=\emptyset \pm A=\emptyset$ for all $A \in \mathcal{P}(Z)$.
A \emph{preordered vector space} $Z:=(Z,\leq)$ is a vector space together with a vector preorder.\footnote{A transitive and reflexive binary relation $\leq$ such that $x\leq y$ is equivalent to $\lambda x+z\leq \lambda y+z$ for every scalar $\lambda >0$ and $z \in Z$.}
Such a vector preorder corresponds uniquely to the convex cone $K:=\set{k\in Z\colon  k\geq 0}$ by means of $x\leq y$ if and only if $y-x \in K$.
We further set $\hat{K}:=K\setminus (-K)$ which is also a convex cone,\footnote{Note that if $\tilde{K}$ is a convex cone for which holds $\tilde{K}\setminus(-\tilde{K})=\tilde{K}$, it follows that the preorder given by $K=\tilde{K}\cup\set{0}$ is such that $\hat{K}=\tilde{K}$.} and denote by $x < y$ if and only if $y-x\in \hat{K}$.
Here, the relation $ < $ determined by $\hat{K}$ corresponds to the asymmetric part of $\leq$.\footnote{That is $x < y$ if and only if $x\leq y$ and it does not hold $y\leq x$.}
We say that a preordered vector space $Z$ is \emph{properly preordered} if, 
\begin{itemize}
  \item there exists $k \in Z$ such that $k > 0$, that is, $\hat{K}\neq \emptyset$ and, 
  \item for every $ k_1,k_2 > 0$, there exists $k_3 > 0$ such that $ k_1,k_2 > k_3$.
\end{itemize}
Note that the assumption $\hat{K}\neq \emptyset$ means that $K$ is not a vector subspace.
If $\hat{K}\neq \emptyset$, then $\set{z \in Z\colon  z^1 < z}$ and $\set{z \in Z\colon  z < z^2}$ are non-empty for every $z_1,z_2 \in Z$, and $\set{z \in Z\colon  z^1< z <z^2}$ is also non-empty as soon as $z^1<z^2$.

%!TEX root = quasiconvex_setvalued.tex
\section{Set-Valued Functions}\label{sec:01}

\subsection{Complete Lattices of Monotone Sets}
Let $Z\colon =(Z,\leq)$ be a preordered vector space with corresponding convex cone $K$.
The preorder $\leq$ can be extended from $Z$ to $\mathcal{P}(Z)$ in at least two canonical ways, see for instance \citet{hamel09} and the references therein.
On the one hand,
\begin{equation*}
  A \leqslant B \quad \text{ if, and only if, } \quad  A + K \supseteq B
\end{equation*}
and on the other hand
\begin{equation*}
  A \eqslantless B \quad \text{ if, and only if, } \quad  A \subseteq B-K
\end{equation*}
for $A, B \in \mathcal{P}(Z)$.\footnote{In particular, $\set{z_1} \leqslant \set{z_2}$ or $\set{z_1}\eqslantless \set{z_2}$ if, and only if, $z_1 \leq z_2$. If $K=\set{0}$, then $\set{z_1} \leqslant \set{z_2}$ or $\set{z_1} \eqslantless \set{z_2}$ if, and only if, $z_1 = z_2$.}
Let 
\begin{equation*}
  \mathcal{P}(Z,K):=\Set{A \in \mathcal{P}(Z)\colon A+K=A}
\end{equation*}
be the set of monotone subsets of $Z$. 
The restriction of $\leqslant$ from ${\cal P}(Z)$ to $\mathcal{P}(Z,K)$ coincides with the partial order $\supseteq$, by means of which $(\mathcal{P}(Z,K), \leqslant )$ is an order complete lattice with
\begin{equation*}
  \inf A_i = \bigcup A_i  \quad \text{and} \quad \sup A_i= \bigcap A_i
\end{equation*}
for every family $\left(A_i\right)\subseteq (\mathcal{P}(Z, K),\leqslant)$.
Symmetrically, the restriction of $\eqslantless$ from ${\cal P}(Z)$ to $\mathcal{P}(Z,-K)$ coincides with the partial order $\subseteq$, and $(\mathcal{P}(Z,-K),\eqslantless)$ is also an order complete lattice with
\begin{equation*}
  \inf B_i = \bigcap B_i \quad \text{and} \quad \sup B_i= \bigcup B_i 
\end{equation*}
for each family $\left(B_i\right)\subseteq (\mathcal{P}(Z, -K),\eqslantless)$.
To simplify the notation, we write $\mathcal{P}(Z,K)$ for $\left(\mathcal{P}(Z,K),\leqslant\right)$ and $\mathcal{P}(Z,-K)$ for $(\mathcal{P}(Z,-K),\eqslantless)$.
With these conventions, it holds $\mathcal{P}(Z,\set{0})=(\mathcal{P}(Z),\supseteq)$ and $\mathcal{P}(Z,-\set{0})=(\mathcal{P}(Z),\subseteq)$.
\begin{remark}\label{rem:blabla01}
    For $z \in Z$, $A \in \mathcal{P}(Z,K)$ and $B\in \mathcal{P}(Z,-K)$ it holds
    \begin{gather*}
        A\leqslant \set{z}\quad\text{is equivalent to}\quad z \in A;\\
        \set{z}\eqslantless B\quad\text{is equivalent to}\quad z \in B.
    \end{gather*}
    Due to this fact, for ease of notations, we convene that 
    \begin{gather*}
        A\leqslant z\quad\text{is equivalent to}\quad z \in A;\\
        z\eqslantless B\quad\text{is equivalent to}\quad z \in B.
    \end{gather*}
    Also, due to its intuitive analogy with the total order in $\mathbb{R}$, for $A \in \mathcal{P}(Z,K)$ and $B \in \mathcal{P}(Z,-K)$ we use the notation
    \begin{align*}
        A < z\quad&\text{for}\quad A\leqslant \tilde{z} \text{ for some }\tilde{z}<z;\\
        z < B\quad&\text{for}\quad \tilde{z} \eqslantless B\text{ for some }z< \tilde{z}.
    \end{align*}
    Note however that this strict inequality does not correspond to the asymmetric part of the preorders $\leqslant$ or $\eqslantless$ respectively.
\end{remark}
For $\lambda>0$ and $A$ in ${\cal P}(Z,K)$ or ${\cal P}(Z,-K)$ we use the element-wise multiplication $\lambda A:=\set{\lambda x\colon x\in A}$ extended by $0 A=K$ for $A \in \mathcal{P}(Z,K)$ and $0 B=-K$ for $B \in \mathcal{P}(Z,-K)$.
In particular, $0 \emptyset = K$ in $\mathcal{P}(Z,K)$ and $0 \emptyset =-K$ in $\mathcal{P}(Z,-K)$.
The sets  $\mathcal{P}(Z,K)$ and $\mathcal{P}(Z,-K)$ with the Minkowsky sum and this multiplication by positive scalars are so called \emph{ordered conlinear spaces} in the sense of \citet{hamel09}.

\subsection{Complete Lattices of Monotonically Closed Sets}\label{subsec:bullet}
In this subsection, $Z$ is a preordered vector space with corresponding cone $K=\set{k\in Z\colon  k\geq 0}$ and such that $\hat{K}=\set{k\in Z\colon  k>0}\neq \emptyset$.
For a set $A \in \mathcal{P}(Z)$, we define
%\footnote{The $^\bullet$-operation corresponds in fact to a residuation on the complete lattice $\mathcal{P}(Z, K)$, see \citep{hamel12} for more details. Defining $A -^\cdot B = \inf\Set{D \in \mathcal{P}(Z, K) \colon  B + D \subseteq A}$ for $A, B \in \mathcal{P}(Z, K)$, one may verify that $A^\bullet=A-^\cdot \hat{K}$.}
\begin{equation*}
  A^\bullet :=\Set{z \in Z\colon  z+\hat{K}\subseteq A}.
\end{equation*}
Since $K+\hat{K}\subseteq \hat{K}$, it follows $A^\bullet+K+\hat{K}\subseteq A^\bullet+\hat K\subseteq A$ and therefore $A^\bullet + K\subseteq A^\bullet$, that is, $A^\bullet\in {\cal P}(Z,K)$.
Furthermore, let
\begin{equation*}
  \mathcal{K}(Z, K):=\Set{A \in \mathcal{P}(Z,K)\colon  A^\bullet=A}.
\end{equation*}
The restriction of $\leqslant$ from $\mathcal{P}(Z,K)$ to $\mathcal{K}(Z,K)$ is also a partial order.
The same holds for $\mathcal{K}(Z,-K)$ with $\eqslantless$.\footnote{Clearly, if $\hat{K}\neq\emptyset$ then
$(-K)\setminus K = -\hat{K} \neq\emptyset$ and the $^\bullet$-operator is given 
by $A^\bullet=\set{z \in Z\colon z-\hat{K}\subseteq A}$.}
\begin{example}
Let $Z=\R^2$ and $A=]x,+\infty[\times ]y,+\infty[$ for $x,y \in \R$.
  \begin{enumerate}
    \item If $\hat{K}=\set{(x,y)\in \R^2\colon x\ge 0\text{ and }y\ge 0}\setminus \set{0}$, then $A^\bullet=A$.
    \item If $\hat{K}=\set{(x,y)\in \R^2\colon x>0\text{ and }y> 0}$, then $A^\bullet=[x,+\infty[\times [y,+\infty[$.
  \end{enumerate}
\end{example}

\begin{proposition}\label{lem:passage}
  For $A,B\in\mathcal{P}(Z,K)$ with $A \leqslant B$, and each family $\left( A_i \right)\subseteq \mathcal{P}(Z,K)$, it holds $A^\bullet \leqslant A$, $(A^\bullet)^\bullet =A^\bullet$, $A^\bullet \leqslant B^\bullet$, and $\sup A_i^\bullet=(\sup A_i)^\bullet$.

  Analogously, for $A, B \in \mathcal{P}(Z,-K)$ with $A\eqslantless B$ and each family $\left( A_i \right)\subseteq \mathcal{P}(Z,-K)$, it holds $A \eqslantless A^\bullet$, $(A^\bullet)^\bullet =A^\bullet$, $A^\bullet \eqslantless B^\bullet$ and $\inf A_i^\bullet=(\inf A_i)^\bullet$.\footnote{Here $\sup$ and $\inf$ are understood in the sense of $(\mathcal{P}(Z,K),\leqslant)$ and $(\mathcal{P}(Z,-K),\eqslantless)$, respectively.}
\end{proposition}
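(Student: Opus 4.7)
The plan is to work directly from the definition $A^\bullet=\Set{z\in Z:\, z+\hat K\subseteq A}$, exploiting that on $\mathcal{P}(Z,K)$ the order $\leqslant$ coincides with $\supseteq$ and $\sup A_i=\bigcap A_i$, and dually on $\mathcal{P}(Z,-K)$.

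Three of the four assertions should go through almost formally. The inequality $A^\bullet\leqslant A$ amounts to $A\subseteq A^\bullet$: for $z\in A$, the monotonicity $A+K=A$ yields $z+\hat K\subseteq z+K\subseteq A$, hence $z\in A^\bullet$. Order preservation is immediate, as $A\leqslant B$ means $B\subseteq A$, so $z+\hat K\subseteq B$ forces $z+\hat K\subseteq A$. Commutation with suprema unfolds to a tautology: $z\in\bigcap_i A_i^\bullet \iff \forall i,\ z+\hat K\subseteq A_i \iff z+\hat K\subseteq\bigcap_i A_i$.

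The only genuine step is idempotence $(A^\bullet)^\bullet=A^\bullet$. One inclusion follows by applying the first assertion to $A^\bullet$ itself, which is admissible because $A^\bullet\in\mathcal{P}(Z,K)$, already verified in the text preceding the definition of $\mathcal{K}(Z,K)$. The reverse inclusion unfolds to: $z+\hat K+\hat K\subseteq A$ should imply $z+\hat K\subseteq A$. This is where I expect the main -- essentially the only -- non-formal step. The resolution is that $\hat K$ is a convex cone, so for any $k\in\hat K$ the half $k/2$ again lies in $\hat K$, and the decomposition $k=k/2+k/2$ gives $z+k=(z+k/2)+k/2\in A^\bullet+\hat K\subseteq A$, as desired. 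Notably, this halving trick avoids any appeal to the second clause of the proper-preorder hypothesis; plain convex-cone structure of $\hat K$ suffices.

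The dual statements on $(\mathcal{P}(Z,-K),\eqslantless)$ are strictly parallel: using the footnoted identity $A^\bullet=\Set{z:\, z-\hat K\subseteq A}$, the identifications $\eqslantless\,=\,\subseteq$ and $\inf A_i=\bigcap A_i$, and the same halving $k=k/2+k/2$ for $k\in\hat K$, one recycles verbatim the four arguments above with signs reversed.
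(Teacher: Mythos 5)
Your proof is correct and follows essentially the same route as the paper's: in particular, the only genuinely non-formal step, idempotence, is handled by exactly the same halving decomposition $k=k/2+k/2$ (the paper writes $(A^\bullet)^\bullet+\hat K/2\subseteq A^\bullet$ and then $(A^\bullet)^\bullet+\hat K\subseteq A^\bullet+\hat K/2\subseteq A$). The only cosmetic difference is that you record the supremum identity as the one-line tautology $\bigcap_i A_i^\bullet=\bigl(\bigcap_i A_i\bigr)^\bullet$, where the paper verifies the two inclusions separately.
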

\begin{proof}
  Let $A,B \in \mathcal{P}(Z,K)$.
  Since $A+\hat{K}\subseteq A$, it follows $A \subseteq A^\bullet$, that is, $A^\bullet \leqslant A$.
  In particular, $(A^\bullet)^\bullet \leqslant A^\bullet$.
  Conversely, from $(A^\bullet)^\bullet+\hat{K}/2\subseteq A^\bullet$, we get $(A^\bullet)^\bullet+\hat{K}\subseteq A^\bullet+\hat{K}/2\subseteq A$, that is  $(A^\bullet)^\bullet\subseteq A^\bullet$ proving the reverse inclusion.
  Further, if $A \leqslant B$, then $B^\bullet +\hat{K}\subseteq B\subseteq A$, which implies $B^\bullet\subseteq A^\bullet$, that is $A^\bullet \leqslant B^\bullet$.
  Finally, since $(\sup_j A_j)^\bullet\subseteq A_i^\bullet$ for all $i$, one has $(\sup_i A_i)^\bullet\subseteq \sup_i A_i^\bullet$ and conversely, $A^\bullet_i +\hat{K}\subseteq A_i$ for all $i$ implies $\sup_i A^\bullet_i+\hat{K}\subseteq \sup_i A_i$, showing that $\sup_i A^\bullet_i\subseteq (\sup_i A_i)^\bullet$.
\end{proof}

According to the previous proposition, the supremum of a family of elements in ${\cal K}(Z,K)$ stays in ${\cal K}(Z,K)$.
However, the infimum $\inf_i A_i^\bullet$ is in general not in ${\cal K}(Z,K)$.
\begin{proposition}
    The space $(\mathcal{K}(Z, K),\leqslant)$ is a complete lattice for the lattice operations
    \begin{equation}\inf A_i :=\left( \bigcup A_i \right)^\bullet\quad \text{and}\quad 
        \sup A_i:=\bigcap A_i \label{eq:passage03}
    \end{equation}
    for every family $(A_i)\subseteq \mathcal{K}(Z,K)$. 
    
    Analogously, the space $(\mathcal{K}(Z,-K),\eqslantless)$ is a complete lattice for the lattice operations
    \begin{equation}
        \inf A_i :=\bigcap A_i\quad\text{and}\quad \sup A_i :=\left(\bigcup A_i\right)^\bullet\label{eq:passage04}  
    \end{equation}
    for every family $(A_i)\subseteq \mathcal{K}(Z,-K)$.
\end{proposition}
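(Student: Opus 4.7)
The plan is to verify the formulas in \eqref{eq:passage03} by separating supremum and infimum and leaning entirely on Proposition \ref{lem:passage}. Since $(\mathcal{P}(Z,K),\leqslant)$ is already a complete lattice with $\sup_i A_i=\bigcap_i A_i$ and $\inf_i A_i=\bigcup_i A_i$, the only real issue is whether these ambient operations stay inside the sublattice $\mathcal{K}(Z,K)$; when they do not, one must project back via $^\bullet$ and check the universal property survives.

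For the supremum of a family $(A_i)\subseteq\mathcal{K}(Z,K)$, Proposition \ref{lem:passage} gives $(\sup_i A_i)^\bullet=\sup_i A_i^\bullet=\sup_i A_i$, so $\bigcap_i A_i\in\mathcal{K}(Z,K)$. It is an upper bound in the ambient lattice, hence in $\mathcal{K}(Z,K)$, and any upper bound $B\in\mathcal{K}(Z,K)$ of the family is in particular an ambient upper bound, so $\bigcap_i A_i\leqslant B$. This yields the first formula with no extra work.

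For the infimum, the ambient candidate $\bigcup_i A_i$ need not be $^\bullet$-closed, so I would replace it by $(\bigcup_i A_i)^\bullet$, which lies in $\mathcal{K}(Z,K)$ by idempotence of $^\bullet$ (Proposition \ref{lem:passage}). The same proposition gives $(\bigcup_i A_i)^\bullet\leqslant\bigcup_i A_i\leqslant A_i$ for each $i$, so it is indeed a lower bound. If $B\in\mathcal{K}(Z,K)$ satisfies $B\leqslant A_i$ for all $i$, then $B\leqslant\bigcup_i A_i$ in $\mathcal{P}(Z,K)$, and the monotonicity of $^\bullet$ together with $B^\bullet=B$ yields $B=B^\bullet\leqslant(\bigcup_i A_i)^\bullet$, which is the required maximality.

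The $\mathcal{K}(Z,-K)$ case is entirely symmetric: exchange $K\leftrightarrow -K$, $\leqslant\leftrightarrow\eqslantless$ and $\bigcup\leftrightarrow\bigcap$, and use the second half of Proposition \ref{lem:passage}. The only place where care is needed is the bookkeeping of the reverse correspondence between $\leqslant$ and set inclusion (and $\eqslantless$ with set inclusion in the dual case), but once that is kept straight the argument is purely formal. I do not foresee any substantive obstacle beyond this: every non-trivial input — monotonicity of $^\bullet$, its idempotence, and the commutation of $^\bullet$ with the ambient supremum — is already packaged in Proposition \ref{lem:passage}.
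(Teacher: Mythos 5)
Your proof is correct and follows essentially the same route as the paper: both reduce everything to Proposition \ref{lem:passage} (idempotence and monotonicity of $^\bullet$ and its commutation with $\sup$), note that the intersection already lies in $\mathcal{K}(Z,K)$, and verify the greatest-lower-bound property of $(\bigcup A_i)^\bullet$ via $B=B^\bullet\leqslant(\bigcup A_i)^\bullet$. If anything, your phrasing of the infimum step is slightly more direct than the paper's, which verifies the same fact by showing any lower bound squeezed between $(\bigcup A_i)^\bullet$ and the family must coincide with it.
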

\begin{proof}
    It is enough to show that $\inf A_i$ in \eqref{eq:passage03} is a minimal element of the family $(A_i)$ for $\leqslant$ in $\mathcal{K}(Z, K)$.
    Since $\cup A_i\leqslant A_i$ for all $i$, it follows from Proposition \ref{lem:passage} that $\inf A_i=(\cup A_i)^\bullet \leqslant A_i^\bullet=A_i$ for all $i$.
    Further, for $A \in \mathcal{K}(Z, K)$ such that $\inf A_i \leqslant A\leqslant A_i$ for all $i$, it follows that $\inf A_i \leqslant A\leqslant \cup A_i$ so that $\inf A_i\leqslant A=A^\bullet \leqslant (\cup A_i)^\bullet =\inf A_i$, showing that $A=\inf A_i$.
\end{proof}

\subsection{Increasing, Convex, and Quasiconvex Functions}
Let $X:=(X,\leq)$ and $Z=(Z,\leq)$ be preordered vector spaces with $C=\set{c\in X\colon  c\geq 0}$ and $K=\set{k\in Z\colon  k\geq 0}$, respectively.
\begin{definition}
  Let $F\colon X \to \mathcal{P}(Z,K)$.
  The \emph{upper level sets}, and the \emph{lower level sets} of $F$ at level $z\in Z$ are defined as 
  \begin{equation*}
    \mathcal{L}_F(z)=\Set{x \in X\colon F(x)\leqslant z} \quad\text{ and }\quad \mathcal{U}_F(z)=\Set{x \in X\colon z \leqslant F(x)},
  \end{equation*}
  respectively.
  The \emph{epigraph} and \emph{hypograph} of $F$ are defined as
  \begin{equation*}
    \epi F=\Set{(x,z)\in X\times Z\colon F(x)\leqslant z}\quad \text{and}\quad\hypo F=\Set{(x,z)\in X \times Z\colon  z\leqslant F(x)},
  \end{equation*}
  respectively.
  We further say that 
  \begin{itemize}
    \item $F$ is \emph{increasing} if $F(x)\leqslant F(y)$ whenever $x,y \in X$ and $x\leq y$;
    \item $F$ is \emph{quasiconvex} if $ F\left(\lambda x+(1-\lambda)y\right)\leqslant \sup \set{F\left(x\right), F\left(y\right)}$, for all $x,y\in X$ and $\lambda \in (0,1)$;
    \item $F$ is \emph{quasiconcave} if $\inf\set{F(x),F(y)}\leqslant F\left(\lambda x+(1-\lambda)y\right)$	for all $x,y\in X$ and $\lambda \in (0,1)$;
    \item $F$ is \emph{convex} if $F\left( \lambda x +\left( 1-\lambda \right)y\right)\leqslant \lambda F\left( x \right)+\left( 1-\lambda \right)F\left( y \right)$	for all $x,y\in X$ and $\lambda \in (0,1)$;
    \item $F$ is \emph{concave} if $\lambda F\left( x \right)+\left( 1-\lambda \right)F\left( y \right)\leqslant F\left( \lambda x +\left( 1-\lambda \right)y\right)$	for all $x,y\in X$ and $\lambda \in (0,1)$.
  \end{itemize} 
  Finally, the function $F^{-1}\colon Z \to \mathcal{P}(X)$, $z \mapsto F^{-1}(z):=\Set{x \in X\colon z \in F(x)}$ is called the inverse of $F$.
  The same definitions also apply to $F\colon X \to \mathcal{P}(Z,-K)$ with the corresponding relation $\eqslantless$.
\end{definition}
\begin{remark}\label{rem:fdslkjhi}
  For the inverse of a function $F\colon X\to{\cal P}(Z)$ one has $(F^{-1})^{-1}=F$, since $z \in F(x)$ exactly when $x \in F^{-1}(z)$.
  If in addition $F\colon X \to \mathcal{P}(Z,K)$, then it holds $\mathcal{L}_F(z)=F^{-1}(z)$ for all $z \in Z$ and $\epi F=\Set{(x,z)\in X\times Z\colon z \in F(x)}$,  due to Remark \ref{rem:blabla01}.
  Analogously, if $F\colon X\to \mathcal{P}(Z,-K)$, then $\mathcal{U}_F(z)=F^{-1}(z)$ for all $z \in Z$, and $\hypo F=\Set{(x,z)\in X\times Z\colon z\in F(x)}$. 
\end{remark}

\begin{proposition}\label{prop:quasiconvex}
  For $F\colon X\to \mathcal{P}(Z,K)$, the following statements hold:
  \begin{enumerate}[label=(\roman*)]
    \item\label{prop:quasiconvex01} $F$ is quasiconvex if, and only if, $\mathcal{L}_F(z)=F^{-1}(z)$ is convex for all $z \in Z$.
    \item \label{prop:quasiconvex02} $F$ is convex if, and only if, $\epi F$ is convex.
    \item \label{prop:quasiconvex03} if $F$ is convex, then $F$ is quasiconvex and $F(x)$ is convex for all $x \in X$.
    \item \label{prop:quasiconvex04} $F$ is quasiconcave, if, and only if, $\mathcal{L}^c_F(z)$ is convex for all $z \in Z$.
      If $F$ takes values in $\mathcal{K}(\R,\R_+)$, then $F$ is quasiconcave if, and only if, $\mathcal{U}_{F}(z)$ 
      is convex for every $z \in Z$.
    \item \label{prop:quasiconvex05} If $F$ is concave, then $\hypo F$ is convex.
      The converse statement holds true if $F$ takes values in $\mathcal{K}(\R,\R_+)$.
  \end{enumerate}
\end{proposition}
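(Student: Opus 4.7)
The plan is to translate each assertion into a pointwise membership criterion via Remark~\ref{rem:blabla01}, which rewrites $F(x)\leqslant z$ as $z\in F(x)$ and hence $\mathcal{L}_F(z)=F^{-1}(z)$, and via Remark~\ref{rem:fdslkjhi}, which gives $\epi F=\{(x,z):z\in F(x)\}$. Throughout, the lattice operations in $(\mathcal{P}(Z,K),\leqslant)$ are $\sup\{F(x),F(y)\}=F(x)\cap F(y)$ and $\inf\{F(x),F(y)\}=F(x)\cup F(y)$, so the (quasi)convex/concave conditions become inclusions between set-valued expressions. There is no deep obstacle here; the only delicate point is the bookkeeping of rewriting $A\leqslant B$ as $A+K\supseteq B$ and keeping track of which way the inclusion goes.

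For \ref{prop:quasiconvex01}, I would chase a single point $z$: convexity of every $F^{-1}(z)$ is, after a quantifier swap, the statement ``$F(x)\cap F(y)\subseteq F(\lambda x+(1-\lambda)y)$ for all $x,y,\lambda$'', which is exactly quasiconvexity. Part \ref{prop:quasiconvex02} is analogous using $\epi F=\{(x,z):z\in F(x)\}$: convexity of $\epi F$ applied to $(x_i,z_i)$ with $z_i\in F(x_i)$ yields $\lambda F(x_1)+(1-\lambda)F(x_2)\subseteq F(\lambda x_1+(1-\lambda)x_2)$, which is the $\leqslant$-convexity inequality for $F$; the converse is immediate. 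For \ref{prop:quasiconvex03}, if $F$ is convex and $z\in F(x)\cap F(y)$, then $z=\lambda z+(1-\lambda)z\in \lambda F(x)+(1-\lambda)F(y)\subseteq F(\lambda x+(1-\lambda)y)$, giving quasiconvexity; setting $y=x$ in the convexity inequality yields $\lambda F(x)+(1-\lambda)F(x)\subseteq F(x)+K=F(x)$, i.e., convexity of the image $F(x)$.

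For \ref{prop:quasiconvex04}, quasiconcavity reads $F(\lambda x+(1-\lambda)y)\subseteq F(x)\cup F(y)$, and the contrapositive of ``$z\in F(\lambda x+(1-\lambda)y)\Rightarrow z\in F(x)\cup F(y)$'' is precisely convexity of $\mathcal{L}^c_F(z)=\{x:z\notin F(x)\}$ for every $z$. In the scalar case, $F(x)=[f(x),+\infty)$ with $f\colon X\to\R\cup\{\pm\infty\}$, so $\mathcal{U}_F(z)=\{x:f(x)\geq z\}$ and the claim reduces to the classical equivalence between quasiconcavity of $f$ and convexity of its upper level sets. Finally, for \ref{prop:quasiconvex05}, if $F$ is concave and $(x_i,z_i)\in\hypo F$ (so that $F(x_i)\subseteq z_i+K$), then
\[
F\bigl(\lambda x_1+(1-\lambda)x_2\bigr)\subseteq \lambda F(x_1)+(1-\lambda)F(x_2)\subseteq \lambda z_1+(1-\lambda)z_2+K,
\]
so $\hypo F$ is convex. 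In the scalar case $\hypo F=\{(x,z):z\leq f(x)\}$ is the classical hypograph; its convexity forces $f$ to be concave by taking $z_i\uparrow f(x_i)$, and this lifts back to concavity of $F$ via $F(x)=[f(x),+\infty)$.
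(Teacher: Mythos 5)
Your proof is correct and follows essentially the same route as the paper: unfold $\leqslant$ on monotone images into reverse inclusion, turn the lattice operations into intersections and unions, and verify each assertion pointwise in $z$. The only cosmetic deviations are that you get the quasiconvexity in (iii) directly from $z=\lambda z+(1-\lambda)z$ rather than via convexity of the level sets as in (i), and that in the scalar halves of (iv) and (v) you appeal to the classical strict/non-strict level-set and hypograph equivalences for $f=\inf F$ where the paper argues with $\mathcal{U}_F(z)$ and $\mathcal{L}^c_F(z)$ explicitly; both arguments are equally valid.
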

\begin{proof}
  \begin{enumerate}[label=\textit{(\roman*)},fullwidth]
    \item Suppose that $F$ is quasiconvex.
      Given $z \in Z$, let $x^1,x^2 \in F^{-1}(z)$ for which $F(x^1) \leqslant z$ and $F(x^2)\leqslant z$ so that $\sup \set{F(x^1), F(x^2)}\leqslant z$.
      From the quasiconvexity, $F(\lambda x^1+(1-\lambda)x^2)\leqslant z$, and so $\lambda x^1+(1-\lambda)x^2 \in F^{-1}(z)$ for all $\lambda \in(0,1)$, that is $F^{-1}(z)$ is convex.

      On the other hand, let $x^1,x^2 \in X$, $\lambda \in (0,1)$ and $z \in Z$ such that $\sup\set{F(x^1), F(x^2)}\leqslant z$.
      It follows $F(x^i)\leqslant  z$, for $i=1,2$.
      From the convexity of $F^{-1}(z)$, we deduce $F(\lambda x^1+(1-\lambda)x^2)\leqslant z$ for all $z \in Z$ satisfying $\sup\set{F(x^1),F(x^2)}\leqslant z$.
      Hence, $F(\lambda x^1+(1-\lambda)x^2)\leqslant \sup\set{F(x^1), F(x^2)}$.

    \item Suppose $F$ is convex.
      For $(x^i,z^i) \in \epi F$, $i=1,2$, and $\lambda \in (0,1)$, it holds $F(x^i)\leqslant z^i$ for $i=1,2$.
      Hence, $\lambda F(x^1)+(1-\lambda)F(x^2)\leqslant \lambda z^1 +(1-\lambda)z^2$ and the convexity of $F$ implies $F(\lambda x^1+ (1-\lambda)x^2)\leqslant \lambda z^1 +(1-\lambda)z^2$, which shows that $\lambda(x^1,z^1)+(1-\lambda)(x^2,z^2)\in \epi F$.

      Conversely, fix $x^1,x^2 \in X$ and $\lambda \in (0,1)$ and notice that $(x^i,z^i)\in \epi F$, for any $F(x^i)\leqslant z^i$, $i=1,2$.
      By convexity of the epigraph holds $(\lambda x^1+(1-\lambda)x^2,\lambda z^1+(1-\lambda)z^2)\in \epi F$, which implies $F(\lambda x^1+(1-\lambda)F(x^2))\leqslant \lambda z^1+(1-\lambda)z^2$ for any $F(x^i)\leqslant z^i$.
      Thus, $F(\lambda x^1+(1-\lambda)F(x^2))\leqslant \lambda F(x^1)+(1-\lambda)F(x^2)$.

    \item Let $F$ be convex so that $\epi F$ is convex by \textit{\ref{prop:quasiconvex02}}. If $(x^1,z),(x^2,z)\in \epi F$, then convexity of $\epi F$ yields $(\lambda x^1+(1-\lambda)x^2,z)\in \epi F$.
      Hence, $F^{-1}(z)$ is convex which by means of \textit{\ref{prop:quasiconvex01}} implies that $F$ is quasiconvex.

      To show that $F(x) $ is convex, let $z^1,z^2 \in F(x)$ and $\lambda \in (0,1)$.
      Then $(x,z^1),(x,z^2) \in \epi F$, and since $\epi F$ is convex, $(x,\lambda z^1+(1-\lambda)z^2) \in \epi F$, that is $\lambda z^1 (1-\lambda)z^2 \in F(x)$.

    \item Given $z \in Z$, let $x^1,x^2 \in \mathcal{L}^c_{F}(z)$ and $\lambda \in (0,1)$.
      It follows that $z \not \in F(x^1)$ and $z \not \in F(x^2)$ and therefore $z \not \in \inf\set{F(x^1),F(x^2)}$.
      Since $F$ is quasiconcave, it follows that $z \not \in F(\lambda x^1+(1-\lambda)x^2)$ and therefore $\lambda x^{1} +(1-\lambda)x^2 \in \mathcal{L}^c_{F}(z)$.
      Conversely, for every $z \not \in \inf\set{F(x^1),F(x^2)}$, it follows that $z \not \in F(x^1)$ and $z \not \in F(x^2)$.
      Hence $x^1,x^2 \in \mathcal{L}^c_F(z)$.
      By convexity of $\mathcal{L}^c_F(z)$, it follows that $\lambda x^1+(1-\lambda)x^2 \in \mathcal{L}^c_F(z)$ and therefore $z \not \in F(\lambda x^1+(1-\lambda)x^2)$.
      Thus $[F(\lambda x^1+(1-\lambda)x^2)]^c\subseteq [F(x^1)]^c\cap [F(x^2)]^c=[\inf\set{F(x^1),F(x^2)}]^c$ which implies $\min\set{F(x^1),F(x^2)}\leqslant F(\lambda x^1+(1-\lambda)x^2)$.

      Suppose now that $F$ takes values in $\mathcal{K}(\R,\R_+)$, that is, $F(x)=[\inf F(x),+\infty[$.
      The lower and upper level sets are given by ${\cal L}_F(z)=\set{x\in X\colon \inf F(x)\le z}$ and ${\cal U}_F(z)=\set{x\in X\colon  z< \inf F(x)}$. Thus, $X={\cal L}_F(z)+{\cal U}_F(z)$ , that is ${\cal U}_F(z)={\cal L}_F^c(z)$ for all $z\in Z$.
    
    \item Suppose that $F$ is concave. Given $(x^1,z^1),(x^2,z^2)\in \hypo F$ and $\lambda \in (0,1)$.
      It follows that $z^1 \leqslant F(x^1)$ and $z^2 \leqslant F(x_2)$ and therefore $\lambda z^1+(1-\lambda)z^2 \leqslant \lambda F(x^1)+(1-\lambda)F(x^2)$.
      It follows that $\lambda z^1 +(1-\lambda) z^2\leqslant F(\lambda x^1+(1-\lambda)x^2 )$ by concavity of $F$, so that $\lambda (x^1,z^1)+(1-\lambda)(x^2,z^2) \in \hypo F$.

      Suppose now that $F$ maps into $\mathcal{K}(\R,\R_+)$.
      Let $z \leqslant F(x^1)$ and $z \leqslant F(x^2)$ so that $z \leqslant \lambda F(x^1)+(1-\lambda)F(x^2)$.
      Since $\hypo F$ is convex, it follows that $z \leqslant F(\lambda x^1+(1-\lambda)x^2)$.
      The same argumentation as previously shows
      \begin{equation*}
        \inf \set{\lambda F(x^1)+(1-\lambda)F(x^2)}=\lambda \inf F(x^1)+(1-\lambda)\inf F(x^2)\leq \inf F(\lambda x^1 +(1-\lambda)x^2)
      \end{equation*}
      and therefore $\lambda F(x^1)+(1-\lambda)F(x^2)\leqslant F(\lambda x^1+(1-\lambda)x^2)$.
  \end{enumerate}
\end{proof}
\begin{proposition}\label{prop:inverse}
  Let $F\colon X\to \mathcal{P}(Z,K)$ be an increasing function. Then the inverse $F^{-1}$ maps from $Z$ to $\mathcal{P}(X,-C)$ and is increasing.
  Furthermore, $F$ is convex if, and only if, $F^{-1}$ is concave and if, and only if, $\epi F$ and $\hypo F^{-1}$ are convex.
\end{proposition}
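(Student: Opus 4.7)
My plan is to split the proof into three independent pieces: (1) $F^{-1}$ maps into $\mathcal{P}(X,-C)$, (2) $F^{-1}$ is increasing, and (3) the four convexity statements are equivalent. The recurring trick throughout is that for $A\in\mathcal{P}(Z,K)$ the relation $A\leqslant B$ (i.e.\ $A+K\supseteq B$) collapses to plain set-inclusion $A\supseteq B$, because $A+K=A$; analogously for $\eqslantless$ in $\mathcal{P}(X,-C)$. This observation converts every inequality appearing in the statement into a straightforward inclusion between sets.

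For (1), take $x\in F^{-1}(z)$ and $c\in C$. Since $x-c\leq x$ and $F$ is increasing, $F(x-c)\leqslant F(x)$; because $F(x-c)$ is $K$-monotone this gives $F(x-c)\supseteq F(x)\ni z$, so $x-c\in F^{-1}(z)$. Hence $F^{-1}(z)-C\subseteq F^{-1}(z)$, so $F^{-1}(z)\in\mathcal{P}(X,-C)$. For (2), let $z^1\leq z^2$ and $x\in F^{-1}(z^1)$; then $z^2=z^1+(z^2-z^1)\in F(x)+K=F(x)$, so $x\in F^{-1}(z^2)$. Thus $F^{-1}(z^1)\subseteq F^{-1}(z^2)$, which, since the target lives in $\mathcal{P}(X,-C)$, is exactly $F^{-1}(z^1)\eqslantless F^{-1}(z^2)$ by Remark \ref{rem:fdslkjhi}.

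For (3), the equivalence ``$F$ convex $\Leftrightarrow \epi F$ convex'' is Proposition \ref{prop:quasiconvex}\textit{\ref{prop:quasiconvex02}}. The equivalence ``$\epi F$ convex $\Leftrightarrow\hypo F^{-1}$ convex'' is immediate from Remark \ref{rem:fdslkjhi}: both sets consist of the pairs $(x,z)$ (respectively $(z,x)$) with $z\in F(x)$, so one is obtained from the other by swapping the two coordinates, which is a linear bijection of $X\times Z$ onto $Z\times X$ and preserves convexity. It remains to show ``$F$ convex $\Leftrightarrow F^{-1}$ concave''. Fixing $x^i\in F^{-1}(z^i)$ and $\lambda\in(0,1)$, convexity of $F$ yields $\lambda F(x^1)+(1-\lambda)F(x^2)\subseteq F(\lambda x^1+(1-\lambda)x^2)$ (again using $K$-monotonicity of the right-hand side), hence $\lambda z^1+(1-\lambda)z^2\in F(\lambda x^1+(1-\lambda)x^2)$, i.e.\ $\lambda x^1+(1-\lambda)x^2\in F^{-1}(\lambda z^1+(1-\lambda)z^2)$, which is exactly $\lambda F^{-1}(z^1)+(1-\lambda)F^{-1}(z^2)\eqslantless F^{-1}(\lambda z^1+(1-\lambda)z^2)$. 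The reverse direction is the same argument read backwards: starting from $z^i\in F(x^i)$, concavity of $F^{-1}$ produces $\lambda x^1+(1-\lambda)x^2\in F^{-1}(\lambda z^1+(1-\lambda)z^2)$, so $\lambda F(x^1)+(1-\lambda)F(x^2)\subseteq F(\lambda x^1+(1-\lambda)x^2)$, i.e.\ $F$ is convex.

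The only mildly delicate step is (1), where one has to resist the temptation to use $F(x)+K\supseteq F(x-c)$ (which is the wrong direction) and instead exploit that $F(x-c)$ itself already absorbs $K$. After that, everything reduces to bookkeeping with the identification of $\leqslant,\eqslantless$ with set-inclusion provided by the monotonicity of the values of $F$ and $F^{-1}$.
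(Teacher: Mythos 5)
Your proof is correct and follows essentially the same route as the paper's: the same verification that $F^{-1}(z)-C\subseteq F^{-1}(z)$ and that $z^1\leq z^2$ forces $F^{-1}(z^1)\subseteq F^{-1}(z^2)$, the same coordinate-flip identification of $\epi F$ with $\hypo F^{-1}$, and the same elementwise Minkowski argument for the equivalence of convexity of $F$ with concavity of $F^{-1}$. The only (harmless) difference is presentational: you systematically translate $\leqslant$ and $\eqslantless$ into plain inclusions via $A+K=A$, and you cite Proposition \ref{prop:quasiconvex}\textit{\ref{prop:quasiconvex02}} for the epigraph equivalence where the paper repeats that argument inline.
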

\begin{proof}
  For  $x \in F^{-1}(z)$ and $c\in C$ it holds $F(x-c)\leqslant F(x) \leqslant z$ and therefore $F^{-1}(z)-C\subseteq F^{-1}(z)$, that is, $F^{-1}(z)\in {\cal P}(X,-C)$.
  Fix now $z_1\le z_2$ so that $z_1\in F(x)$ implies $z_2\in F(x)$.
  Hence
  \begin{equation*}
    F^{-1}(z_1)=\set{x\in X\colon  z_1\in F(x)}\subseteq \set{x\in X\colon  z_2\in F(x)}=F^{-1}(z_2),
  \end{equation*}
  which shows that $F^{-1}(z_1)\eqslantless F^{-1}(z_2)$ showing that $F^{-1}$ is increasing.

  Suppose that $F$ is convex.
  For $(x^i,z^i) \in \epi F$, $i=1,2$, and $\lambda \in (0,1)$, it holds $F(x^i)\leqslant z^i$ for $i=1,2$.
  Hence, $\lambda F(x^1)+(1-\lambda)F(x^2)\leqslant \lambda z^1 +(1-\lambda)z^2$ and the convexity of $F$ implies $F(\lambda x^1+ (1-\lambda)x^2)\leqslant \lambda z^1 +(1-\lambda)z^2$, which shows that $\lambda(x^1,z^1)+(1-\lambda)(x^2,z^2)\in \gr F$.
  Since $\hypo F^{-1}$ is up to flipping the coordinates equal to $\epi F$, we deduce that $\hypo F^{-1}$ is also convex.

  Conversely, fix $x^1,x^2 \in X$ and $\lambda \in (0,1)$ and notice that $(x^i,z^i)\in \epi F$, for any $F(x^i)\leqslant z^i$, $i=1,2$.
  By convexity of the epigraph holds $(\lambda x^1+(1-\lambda)x^2,\lambda z^1+(1-\lambda)z^2)\in \gr F$, which implies $F(\lambda x^1+(1-\lambda)F(x^2))\leqslant \lambda z^1+(1-\lambda)z^2$ for any $F(x^i)\leqslant z^i$.
  Thus, $F(\lambda x^1+(1-\lambda)F(x^2))\leqslant \lambda F(x^1)+(1-\lambda)F(x^2)$.

  Let us show that if $F$ is convex, then $F^{-1}$ is concave.
  For $\lambda \in (0,1)$ and $z^1,z^2 \in Z$, let $x^i \in F^{-1}(z^i)$ for $i=1,2$ so that $x=\lambda x^1+(1-\lambda)x^2 \in \lambda F^{-1}(z^1)+(1-\lambda)F^{-1}(z^2)$.
  By means of Proposition \ref{prop:inverse}, it follows that $z^i \in F(x^i)$ for $i=1,2$ and therefore $\lambda z^1+(1-\lambda)z^2 \in \lambda F(x^1)+(1-\lambda) F(x^2)$.
  However, $F$ being convex, it follows that $F(\lambda x^1+(1-\lambda)x^2)\leqslant \lambda z^1+(1-\lambda)z^2$, which by means of Proposition \ref{prop:inverse} yields $\lambda x^1+(1-\lambda)x^2 \in F^{-1}(\lambda z^1+(1-\lambda)z^2 )$.
  Hence, $\lambda F^{-1}(z^1)+(1-\lambda)F^{-1}(z^2)\eqslantless F^{-1}(\lambda z^1+(1-\lambda)z^2)$.
  The converse statement is analogous.
\end{proof}

\begin{remark}\label{rem:quasiconcave}
  Proposition \ref{prop:inverse} also holds when $K$ and $-C$ are replaced by $-K$ and $C$, and the respective orders
  $\leqslant$ by $\eqslantless$ and $\eqslantless$ by $\leqslant$, respectively.

  For $F\colon X \to \mathcal{P}(Z,-K)$, the statements of Proposition \ref{prop:quasiconvex} modify as follows
  \begin{enumerate}[label=(\roman*)]
    \item\label{prop:quasiconcave01} $F$ is quasiconcave if, and only if, $\mathcal{U}_{F}(z)=F^{-1}(z)$ is convex for all $z \in Z$;
    \item \label{prop:quasiconcave02} $F$ is concave if, and only if, $F^{-1}$ is convex and if, and only if, $\epi F^{-1}$ and $\hypo F$ are convex;
    \item \label{prop:quasiconcave03} if $F$ is concave, then $F$ is quasiconcave and $F(x)$ is convex for any $x \in X$;
    \item \label{prop:quasiconcave04} $F$ is quasiconvex, if, and only if, $\mathcal{U}^c_{F}(z)$ is convex for all $z \in Z$.
      If $F$ takes values in $\mathcal{K}(\R,-\R_+)$, then $F$ is quasiconcave if, and only if, $\mathcal{L}_{F}(z)$ is convex for every $z \in Z$;
    \item \label{prop:quasiconcave05} if $F$ is convex, then $\epi F$ is convex.
      The reciprocal is true if $F$ takes values in $\mathcal{K}(\R,-\R_+)$.

  \end{enumerate}
  In particular, if $F\colon X \to \mathcal{P}(Z,K)$ is increasing and convex, then $F^{-1}\colon Z\to \mathcal{P}(X,-C)$ is increasing and concave.
\end{remark}

\section{Duality of Increasing Functions}\label{sec:02}
Throughout this section, let $X=(X,\leq)$ and $Z=(Z,\leq)$ be preordered vector spaces with $C=\set{c \in X\colon  c\geq 0}$ and $K=\set{k \in Z\colon  k\geq 0}$, respectively.
Furthermore, we assume that $\hat{C}=\set{c \in C\colon  c>0}$ as well as $\hat{K}=\set{k\in K\colon  k>0}$ are non-empty.
Given an increasing function $F\colon X\to \mathcal{K}(Z,K)$, we define its \emph{left-} and \emph{right-continuous version}\footnote{Since $\hat{C}\neq \emptyset$, it follows that $\set{y \in X\colon y<x}$ and $\set{y\in X\colon x< y}$ are non-empty for every $x \in X$.} $F^-\colon  X\to \mathcal{K}(Z,K)$ and $F^+\colon X\to \mathcal{K}(Z,K)$ by
\begin{equation*}
  F^-(x) :=\sup_{y<x} F(y)\qquad \text{and}\qquad F^+(x): =\inf_{x<y} F(y).
\end{equation*}
We say that an increasing function $F$ is \emph{left-continuous} or \emph{right-continuous} if $F=F^-$ or $F=F^+$ respectively.
Since $F$ is increasing it follows immediately that $F^-\leqslant F\leqslant F^+$.
\begin{lemma}\label{lem:LFRF}
  Let $F\colon X \to \mathcal{K}(K,Z)$ be an increasing function.\footnote{The statement of the Lemma also holds for increasing functions $F\colon Z\to \mathcal{K}(X,-\hat{C})$ replacing $\leqslant$ by $\eqslantless$.}
  Then
  \begin{equation}
    F^+(x)\leqslant F^-(y),\quad \text{ for all }x<y,
    \label{eq:LFRF01}
  \end{equation}
  and
  \begin{equation}
    F^-=(F^-)^-=(F^+)^-,\quad\text{and}\quad F^+=(F^+)^+=(F^-)^+.
    \label{eq:LFRF02}
  \end{equation}
\end{lemma}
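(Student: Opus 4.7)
The plan is to translate everything into the set-inclusion dictionary of Section~1 and then exploit the fact that order-density between strictly ordered points is automatic. Recall that in $\mathcal{P}(Z,K)$ the relation $A\leqslant B$ coincides with $A\supseteq B$; that in $\mathcal{K}(Z,K)$ the supremum is plain intersection while the infimum is the $\bullet$-closure of the union; and that $F$ increasing reads $F(x)\supseteq F(y)$ whenever $x\leq y$. The already noted $F^-\leqslant F\leqslant F^+$ then becomes $F^-(x)\supseteq F(x)\supseteq F^+(x)$. I will also use that $F^-$ and $F^+$ are themselves increasing, which drops out of $C+\hat{C}\subseteq\hat{C}$ (so that the indexing sets $\{w:w<y\}$ and $\{w:y<w\}$ grow monotonically with $y$).

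The engine of everything is the observation that once $\hat{C}\neq\emptyset$, order-density is free: if $x<y$ then $w:=(x+y)/2$ satisfies $w-x=y-w=(y-x)/2\in\hat{C}$ by positive scaling inside the cone, hence $x<w<y$. Given this, inequality (\ref{eq:LFRF01}) falls out in a single line: for such a $w$ one has $F^+(x)\leqslant F(w)$ because $w$ is one of the arguments of the infimum defining $F^+(x)$, and $F(w)\leqslant F^-(y)$ because $w$ is one of the arguments of the supremum defining $F^-(y)$; chaining gives $F^+(x)\leqslant F^-(y)$.

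For (\ref{eq:LFRF02}) I would handle the four identities with the same sandwich template. For $(F^-)^-=F^-$: applying $\sup_{y<x}$ to the pointwise relation $F^-(y)\leqslant F(y)$ gives $(F^-)^-(x)\leqslant F^-(x)$; the reverse uses density, chaining $F(z)\leqslant F^-(w)\leqslant (F^-)^-(x)$ for any $z<w<x$ and taking the supremum over $z<x$. For $(F^+)^-=F^-$: applying $\sup_{y<x}$ to $F(y)\leqslant F^+(y)$ gives the $\leqslant$-direction, while the reverse is immediate from the freshly-proved (\ref{eq:LFRF01}), which makes $F^-(x)$ an upper bound for $\{F^+(y):y<x\}$ and hence dominates their supremum. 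The two identities for $F^+$ are dual: $(F^-)^+=F^+$ again uses (\ref{eq:LFRF01}), now to make $F^+(x)$ a lower bound for $\{F^-(y):x<y\}$, and $(F^+)^+=F^+$ uses the symmetric density sandwich with infima (chaining $(F^+)^+(x)\leqslant F^+(w)\leqslant F(z)$ for $x<w<z$ and taking $\inf_{z>x}$).

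The main obstacle is not deep but purely notational: one has to keep the $\leqslant$/$\supseteq$ flip straight, and remember that the $\bullet$-closure enters only through the infimum formula in $\mathcal{K}(Z,K)$, which Proposition \ref{lem:passage} shows causes no trouble because $\bullet$ is order-preserving and commutes with the relevant intersections. The only genuinely substantive point is that betweenness comes for free from $\hat{C}\neq\emptyset$ via the midpoint, so no hypothesis beyond the one standing at the head of Section~2 is needed.
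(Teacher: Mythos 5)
Your proof is correct and follows essentially the same route as the paper: \eqref{eq:LFRF01} via the midpoint giving a point strictly between $x$ and $y$, and \eqref{eq:LFRF02} by combining the monotone sandwich $F^-\leqslant F\leqslant F^+$ under the $^-$/$^+$ operations with \eqref{eq:LFRF01}. Your version is slightly more explicit about where order-density re-enters in the sup/inf comparisons, but the ingredients are identical.
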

\begin{proof}
    As for \eqref{eq:LFRF01}, it follows directly from the definition and $\hat{K}\neq \emptyset$ since $F^+(x)\leqslant F(z)\leqslant F^-(y)$ for every $z \in \set{\tilde{z}\colon  x<\tilde{z}<y}$.
    As for \eqref{eq:LFRF02}, on the one-hand, $F^-\leqslant F\leqslant F^+$ yields $(F^-)^-\leqslant F^-\leqslant (F^+)^-$.
    On the other-hand, \eqref{eq:LFRF01} yields $(F^+)^-(x)=\sup_{\tilde{x}<x} F^+(\tilde{x})\leqslant \sup_{\tilde{x}<x}F^-(\tilde{x})=(F^-)^-(x)$ so that $(F^+)^-=(F^-)^-$ which shows that $(F^-)^-= F^- = (F^+)^-$.
    Relation $F^+=(F^+)^+=(F^-)^+$ follows along the same argumentation.
\end{proof}
Let now $F\colon X \to \mathcal{K}(Z,K)$ and $G\colon Z \to \mathcal{K}(X,-C)$ be increasing functions.
By Proposition \ref{prop:inverse}, $F^{-1}\colon Z \to \mathcal{P}(X,-C)$ and $G^{-1}\colon X\to \mathcal{P}(Z,K)$ are both increasing.
The main result of this section is a complete duality between increasing functions.
\begin{theorem}\label{thm:onetoone}
  The inverse $F^{-1}$ of any increasing left-continuous function $F\colon X \to \mathcal{K}(Z,K)$ is an increasing right-continuous function from $Z$ to $\mathcal{K}(X,-C)$.
  Conversely, the inverse $G^{-1}$ of any increasing right-continuous function $G\colon Z \to \mathcal{K}(X,-C)$ is an increasing left-continuous function from $X$ to $\mathcal{K}(Z,K)$.

  Further, for any increasing left-continuous function $F\colon X \to \mathcal{K}(Z,K)$, and increasing right-continuous function $G\colon  Z \to \mathcal{K}(Z,-C)$, it holds
  \begin{equation}\label{eq:eqonetoone}
    F=(F^{-1})^{-1}\quad \text{and}\quad G=(G^{-1})^{-1}.
  \end{equation}
  If $G=F^{-1}$ we have\footnote{Recall that with our conventions of Remark \ref{rem:blabla01}, Relation \eqref{eq:C2} corresponds to 
  $F^+(x)\leqslant \tilde{z}$ for some $\tilde{z}<z$ if, and only if, $\tilde{x}\eqslantless G^-(z)$ for some $\tilde{x}>x$.  
  }
  \begin{align}
    &&F(x)&\leqslant z &&\text{if, and only if,}&  x&\eqslantless G(z)&&\label{eq:C1}\\
    &&F^+(x) &< z &&\text{if, and only if,}&  x &< G^-(z)&&\label{eq:C2}
  \end{align}
\end{theorem}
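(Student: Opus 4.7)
The plan is to establish the four assertions in turn, reducing everything to the set-theoretic identity $(F^{-1})^{-1}=F$ from Remark \ref{rem:fdslkjhi} and the ``membership equals relation'' conventions of Remark \ref{rem:blabla01}. Proposition \ref{prop:inverse} already yields that $F^{-1}$ takes values in $\mathcal{P}(X,-C)$ and is increasing, so for the first assertion what remains is (a) that each level set $F^{-1}(z)$ is $\bullet$-closed and (b) that $z\mapsto F^{-1}(z)$ is right-continuous. For (a), if $x\in F^{-1}(z)^\bullet$ then $z\in F(\tilde x)$ for every $\tilde x<x$, so left-continuity together with $F^-(x)=\bigcap_{\tilde x<x}F(\tilde x)$ (since $\sup$ in $\mathcal{K}(Z,K)$ is intersection) gives $z\in F(x)$. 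For (b), monotonicity yields $F^{-1}(z)\subseteq\bigcap_{z<\tilde z}F^{-1}(\tilde z)$; conversely $z+\hat K\subseteq F(x)$ forces $z\in F(x)^\bullet=F(x)$ because $F(x)\in\mathcal{K}(Z,K)$. The second assertion follows by the symmetric argument, \eqref{eq:eqonetoone} is Remark \ref{rem:fdslkjhi}, and \eqref{eq:C1} is the direct unwinding $F(x)\leqslant z\Leftrightarrow z\in F(x)\Leftrightarrow x\in F^{-1}(z)\Leftrightarrow x\eqslantless G(z)$.

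The analytical core is \eqref{eq:C2}. Using the conventions of Remark \ref{rem:blabla01} together with the identities $F^+(x)=\bigl(\bigcup_{y>x}F(y)\bigr)^\bullet$ and $G^-(z)=\bigl(\bigcup_{\tilde z<z}G(\tilde z)\bigr)^\bullet$ from \eqref{eq:passage03}--\eqref{eq:passage04}, \eqref{eq:C2} unfolds to the equivalence: there exists $\tilde z<z$ with $\tilde z+\hat K\subseteq\bigcup_{y>x}F(y)$ if and only if there exists $\tilde x>x$ with $\tilde x-\hat C\subseteq\bigcup_{\tilde z'<z}G(\tilde z')$. The backward direction is the easier half: setting $u=(x+\tilde x)/2$ gives $x<u<\tilde x$, hence $u\in G(\tilde z')$ for some $\tilde z'<z$, and $\tilde z'\in F(u)\subseteq\bigcup_{y>x}F(y)\subseteq F^+(x)$ delivers $F^+(x)<z$.

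The forward direction is the main obstacle, and the only place where left-continuity of $F$ is genuinely used. Starting from $\tilde z\in F^+(x)$ with $\tilde z<z$, I would set $\bar z=(\tilde z+z)/2$; positive scaling in the cone $\hat K$ makes $\bar z\in\tilde z+\hat K$ and $\bar z<z$. Hence $\bar z\in F(y^*)$ for some $y^*>x$. Left-continuity then rewrites $F(y^*)=\bigcap_{u<y^*}F(u)$, so $u\in G(\bar z)$ for every $u<y^*$, which gives $y^*-\hat C\subseteq G(\bar z)\subseteq\bigcup_{\tilde z'<z}G(\tilde z')$, i.e.\ $y^*\in G^-(z)$ with $y^*>x$. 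The delicate point is precisely this step: one must manufacture a value strictly below $z$ still witnessed in the union $\bigcup_{y>x}F(y)$, and then propagate that witness down a whole half-line $y^*-\hat C$ on the $G$-side using left-continuity. The ``midpoint tricks'' for $\bar z$ and $u$ rely on $\hat K$ and $\hat C$ being cones, and that is all one needs beyond what is already built into the $\bullet$-closure formalism of Section \ref{subsec:bullet}.
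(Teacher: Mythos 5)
Your proof is correct and follows essentially the same route as the paper: the $\bullet$-closedness of $F^{-1}(z)$ via left-continuity, the right-continuity of $F^{-1}$ via $F(x)=[F(x)]^\bullet$, and \eqref{eq:C1} by unwinding memberships are exactly the paper's computations, and for \eqref{eq:C2} you merely expand directly what the paper delegates to Lemma \ref{lem:LFRF}, using the same midpoint interpolation that underlies that lemma. One small remark: in your forward direction of \eqref{eq:C2} the conclusion already follows from $\bar z<z$ and $y^\ast\in G(\bar z)\subseteq G^-(z)$, so the propagation of the witness down $y^\ast-\hat C$ via left-continuity is harmless but not needed; left-continuity of $F$ is genuinely used only for the $\bullet$-closedness of the level sets.
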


\begin{proof}
  Let $F\colon X \to \mathcal{K}(Z,K)$ be an increasing left-continuous function.
  By Proposition \ref{prop:inverse}, $F^{-1}$ is increasing and maps into $\mathcal{P}(X,-C)$.
  Since $F$ is left-continuous, it further holds
  \begin{multline*}
    \left[F^{-1}(z)\right]^\bullet=\Set{x \in X\colon  x-c \in F^{-1}(z)\text{ for all }c>0}=\Set{x \in X\colon  z \in F(x-c)\text{ for all }c>0}\\
    =\Set{x \in X\colon  z \in \sup_{\tilde{x}<x}F(\tilde{x})=F^-(x)=F(x)}=\Set{x \in X\colon  z \in F(x)}=F^{-1}(z).
  \end{multline*}
  Hence $F^{-1}$ is an increasing function from $Z$ to $\mathcal{K}(X,-C)$.
  As for the right-continuity, since $F$ maps to $\mathcal{K}(Z,K)$, it follows that $F(x)=[F(x)]^\bullet$, so that
  \begin{multline*}
    \left[F^{-1}\right]^+(z)=\inf_{z<\tilde{z}}F^{-1}(\tilde{z})=\Set{x \in X\colon  x \in F^{-1}(z+k)\text{ for all }k>0}\\
    =\Set{x \in X\colon  z+k \in F(x)\text{ for all }k>0}=\Set{x \in X\colon  z \in [F(x)]^\bullet=F(x)}=F^{-1}(z),
  \end{multline*}
  showing that $F^{-1}$ is right-continuous.
  As for \eqref{eq:eqonetoone}, it follows from $z \in F(x)$ if, and only if, $x \in F^{-1}(z)$ as previously mentioned in Remark \ref{rem:fdslkjhi}.
  Relation \eqref{eq:C1} follows from $F(x)\leqslant z$ if, and only if, $z \in F(x)$ if, and only if, $x \in F^{-1}(z)=G(z)$ if, and only if, $x \eqslantless G(z)$.
  Finally, Relation \eqref{eq:C2} follows from \eqref{eq:C1} and Lemma \ref{lem:LFRF}.
\end{proof}

\begin{lemma}\label{lem:minmax}
  Let $F_1,F_2\colon  X \to \mathcal{K}(Z,K)$ be increasing left-continuous functions and $G_1,G_2\colon Z \to \mathcal{K}(X,-C)$ be increasing increasing right-continuous functions, such that $F_i^{-1}=G_i$ for $i=1,2$.
  Then
  \begin{equation}
    F_1\leqslant F_2\quad \text{if, and only if,} \quad G_2 \eqslantless G_1.
    \label{eq:minimality}
  \end{equation}
\end{lemma}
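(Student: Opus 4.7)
The plan is to unfold the two orders into set-inclusions and then translate between $F_i$ and $G_i$ using the graph duality $z\in F_i(x)\Leftrightarrow x\in G_i(z)$, which is just Remark \ref{rem:fdslkjhi}.

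First I would recall that, by definition of the order on $\mathcal{P}(Z,K)$ and $\mathcal{P}(X,-C)$, the condition $F_1\leqslant F_2$ means $F_2(x)\subseteq F_1(x)$ for every $x\in X$, whereas $G_2\eqslantless G_1$ means $G_2(z)\subseteq G_1(z)$ for every $z\in Z$. Since $G_i=F_i^{-1}$, Remark \ref{rem:fdslkjhi} gives the pointwise equivalence
\begin{equation*}
    z\in F_i(x)\quad\text{if, and only if,}\quad x\in G_i(z), \qquad i=1,2.
\end{equation*}

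Next I would chain these equivalences together. Assume $F_1\leqslant F_2$. For any $z\in Z$ and $x\in G_2(z)$ one has $z\in F_2(x)\subseteq F_1(x)$, hence $x\in G_1(z)$. Thus $G_2(z)\subseteq G_1(z)$ for every $z$, i.e.\ $G_2\eqslantless G_1$. Conversely, if $G_2\eqslantless G_1$, then for any $x\in X$ and $z\in F_2(x)$ we have $x\in G_2(z)\subseteq G_1(z)$, so $z\in F_1(x)$; hence $F_2(x)\subseteq F_1(x)$ for every $x$, which is $F_1\leqslant F_2$.

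There is essentially no obstacle here: once the two orders are identified with $\supseteq$ on $\mathcal{P}(Z,K)$ and $\subseteq$ on $\mathcal{P}(X,-C)$, the argument is a direct swap of the roles of $x$ and $z$ through the graph of $F_i$. Note that neither the left-continuity of $F_i$ nor the right-continuity of $G_i$ nor the $\bullet$-closedness of the images is actually used in this step; those hypotheses serve only to guarantee, via Theorem \ref{thm:onetoone}, that $F_i\leftrightarrow G_i$ is a genuine bijective correspondence between well-defined classes, so that the statement $F_i^{-1}=G_i$ makes sense in both directions.
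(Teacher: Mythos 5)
Your proof is correct and is essentially the paper's own argument: the paper likewise reduces both orders to inclusions and passes through the equivalence $z\in F_i(x)\Leftrightarrow x\in G_i(z)$ (phrased there as $F_i(x)\leqslant z \Leftrightarrow x\eqslantless G_i(z)$), and it too uses none of the continuity hypotheses in this step.
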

\begin{proof}
  Suppose that $F_1\leqslant F_2$,
  \begin{multline*}
    G_2(z)=F_2^{-1}(z)=\Set{x \in X\colon z \in F_{2}(x)}=\Set{x \in X\colon F_{2}(x)\leqslant z}\\
    \eqslantless\Set{x \in X\colon F_{1}(x)\leqslant z}=\Set{x \in X\colon  z \in F_1(x)}=F_1^{-1}(z)=G_1(z)	
  \end{multline*}
  and therefore $G_{2}\eqslantless G_1$.
  The converse implication follows along the same argumentation.
\end{proof}

\section{Topological Duality of Quasiconvex-Functions}\label{sec:03}

In this section we assume that $X$ is a locally convex topological vector space with dual $X^\ast$.
The dual pairing between $x\in X$ and $x^\ast \in X^\ast$ is denoted by $\langle x^\ast,x\rangle$.
Further, $Z$ is a preordered vector space with convex ordering cone $K=\set{k \in Z\colon  k\geq 0}$.
A function $F\colon X\to\mathcal{K}(Z,K)$ is called \emph{lower level-closed} if $\mathcal{L}_{F}(z)=\set{x \in X\colon  F(x)\leqslant z}$ is closed for all $z \in Z$.\footnote{Analogously, $G\colon X \to \mathcal{K}(Z,-K)$ is called \emph{lower level-closed} if $\mathcal{L}_{G}(z)=\set{x \in X\colon  G(x)\geqslant z}$ is closed for all $z \in Z$.}

The existence of a dual representation for lower level-closed quasiconvex functions is rather straightforward and does not need any assumption on the preordered vector space $Z$.
\begin{proposition}\label{prop:existence}
    Let $F\colon X \to \mathcal{P}(Z,K)$ be a lower level-closed quasiconvex function.
    Then
    \begin{equation}\label{eq:sdlkfji}
        F(x)=\sup_{x^\ast \in X^\ast} R(x^\ast, \langle x^\ast,x\rangle), \quad x \in X
    \end{equation}
    where $R\colon X^\ast \times \mathbb{R}\to \mathcal{P}(Z,K)$ is given by
    \begin{equation*}
        R(x^\ast, s)=\Set{z \in Z\colon  s\leq \sup_{x \in F^{-1}(z)} \langle x^\ast, x\rangle }.
    \end{equation*}
\end{proposition}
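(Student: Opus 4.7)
The plan is to reduce the statement to a Hahn-Banach separation applied to the closed convex lower level sets of $F$. First, by Remark \ref{rem:fdslkjhi} one has $\mathcal{L}_F(z)=F^{-1}(z)$, and by Proposition \ref{prop:quasiconvex}\,\textit{\ref{prop:quasiconvex01}} the quasiconvexity of $F$ forces $F^{-1}(z)$ to be convex for every $z\in Z$. Combined with the lower level-closed hypothesis, $F^{-1}(z)$ is then a closed convex subset of the locally convex space $X$.

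I would then invoke the standard consequence of the Hahn-Banach separation theorem, namely that every closed convex set $A\subseteq X$ admits the representation
\begin{equation*}
A = \Set{x \in X \colon \langle x^\ast, x\rangle\leq \sup_{y\in A}\langle x^\ast,y\rangle \text{ for all } x^\ast\in X^\ast},
\end{equation*}
with the convention $\sup_{y\in\emptyset}\langle x^\ast,y\rangle=-\infty$, so that the right-hand side is empty precisely when $A$ is. Applying this to $A=F^{-1}(z)$ and translating back via Remark \ref{rem:fdslkjhi} yields the chain of equivalences
\begin{equation*}
z\in F(x) \;\Longleftrightarrow\; x\in F^{-1}(z) \;\Longleftrightarrow\; \forall x^\ast\in X^\ast,\; \langle x^\ast,x\rangle\leq\sup_{y\in F^{-1}(z)}\langle x^\ast,y\rangle \;\Longleftrightarrow\; z\in\bigcap_{x^\ast\in X^\ast} R(x^\ast,\langle x^\ast,x\rangle).
\end{equation*}

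It remains to check that $R(x^\ast,s)\in \mathcal{P}(Z,K)$, so that the above intersection coincides with the supremum in the complete lattice $(\mathcal{P}(Z,K),\leqslant)$. For $z\in R(x^\ast,s)$ and $k\in K$, the identity $F(x)+K=F(x)$ gives $F^{-1}(z)\subseteq F^{-1}(z+k)$, whence $\sup_{y\in F^{-1}(z+k)}\langle x^\ast,y\rangle\geq \sup_{y\in F^{-1}(z)}\langle x^\ast,y\rangle\geq s$ and therefore $z+k\in R(x^\ast,s)$. Since suprema in $\mathcal{P}(Z,K)$ are intersections, as recalled in Section \ref{sec:01}, the representation \eqref{eq:sdlkfji} follows from the chain of equivalences above.

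The only genuine obstacle is the Hahn-Banach step; everything else is bookkeeping within the set-valued ordering. I expect no further difficulty beyond handling empty-set conventions consistently, in particular checking that $F(x)=\emptyset$ corresponds, via the above equivalences, to $\bigcap_{x^\ast} R(x^\ast,\langle x^\ast,x\rangle)=\emptyset$ (which is guaranteed by the separation of any point from an empty closed convex set).
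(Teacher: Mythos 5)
Your proposal is correct and follows essentially the same route as the paper: identify $F^{-1}(z)=\mathcal{L}_F(z)$ as a closed convex set, apply the Hahn--Banach separation characterization, rewrite the resulting intersection over $x^\ast$ as a supremum in $(\mathcal{P}(Z,K),\leqslant)$, and verify $R(x^\ast,s)+K=R(x^\ast,s)$ via the monotonicity of $F^{-1}$. The only cosmetic difference is that you argue the monotonicity of $F^{-1}$ directly from $F(x)+K=F(x)$ while the paper cites Proposition \ref{prop:inverse} with $C=\set{0}$, and you make the empty-set convention explicit where the paper leaves it implicit.
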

\begin{proof}
    Since $F^{-1}(z)$ is closed and convex, the Hahn-Banach separation theorem yields
    \begin{multline*}
        F(x)=\left(F^{-1}\right)^{-1}(x)=\Set{z \in Z\colon  x \in F^{-1}(z) }\\
        =\Set{z \in Z\colon  \langle x^\ast,x\rangle\leq \sup_{x \in F^{-1}(z)} \langle x^\ast, x\rangle,\text{ for all }x^\ast \in X^\ast }\\
        =\sup_{x^\ast \in X^\ast}\Set{z \in Z\colon  \langle x^\ast,x\rangle\leq \sup_{x \in F^{-1}(z)} \langle x^\ast, x\rangle }=\sup_{x^\ast \in X^\ast} R(x^\ast, \langle x^\ast,x\rangle).
    \end{multline*}
    Finally, let $z \in R(x^\ast,s)$ and $k \geq 0$.
    By Proposition \ref{prop:inverse} with $C=\set{0}$, it follows that $F^{-1}(z) \eqslantless F^{-1}(z+k)$.
    Hence, $s\leq \sup_{x \in F^{-1}(z)}\langle x^\ast, x\rangle\leq \sup_{x \in F^{-1}(z+k)}\langle x^\ast, x\rangle$ showing that $R(x^\ast, s)\in \mathcal{P}(Z,K)$.
\end{proof}

The goal is to characterize uniquely the dual function $R$ in \eqref{eq:sdlkfji}.
It turns out that such a unique characterization can be achieved for lower level-closed and quasiconvex functions $F\colon X \to \mathcal{K}(Z,K)$.
So, from now on, we assume that $\hat{K}=\set{k \in Z\colon k>0}\neq \emptyset$.
A \emph{risk function}\footnote{See \citet{drapeau01} for a justification of this denomination.} is a function $R\colon  X^\ast\times \mathbb{R}\to \mathcal{K}(Z, K)$ such that
\begin{enumerate}[label=(\roman*)]
    \item \label{cond:max01} $s\mapsto R(x^\ast,s)$ is increasing and left-continuous for every $x^\ast \in X^\ast$.
\end{enumerate}
We denote by $\mathcal{R}$ the set of all risk functions.
Given a risk function $R\in \mathcal{R}$, the function defined as
\begin{equation*}
    F(x)\colon =\sup_{x^\ast \in X^\ast} R(x^\ast, \langle x^\ast,x\rangle), \quad x \in X
\end{equation*}
is a lower level-closed quasiconvex function from $X$ to $\mathcal{K}(Z,K)$.
If $Z$ is a properly preordered vector space, we call a risk function $R \in \mathcal{R}$ \emph{maximal} if 
\begin{enumerate}[label=(\roman*), resume]
    \item \label{cond:max05} $R$ is jointly quasiconcave and such that
        \begin{equation}
            R(\lambda x^\ast,s)=R(x^\ast,s/\lambda),
            \label{eq:poshomo}
        \end{equation}
        for all $x^\ast \in X^\ast$, $s\in \R$ and $\lambda>0$;
    \item \label{cond:max02} the set $\Set{x^\ast \in X^\ast\colon  R^+(x^\ast, s) <z}$ is open for every $s \in \R$ and $z \in Z$;

    \item \label{cond:max03} $\cup_{s \in \R}R(x^\ast,s)=\cup_{s \in \R}R(y^\ast,s)$ for all $x^\ast,y^\ast \in X^\ast$.
\end{enumerate}
The set of maximal risk functions is denoted by $\mathcal{R}^{\max}$.

\begin{remark}
    In case where $\leqslant$ is a total preorder, for instance $\mathcal{K}(Z,K)=\mathcal{K}(\mathbb{R},\mathbb{R}_+)$, then the set
    \begin{equation*}
        \Set{x^\ast \in X^\ast\colon  R^+(x^\ast, s) < z}^c=\Set{x^\ast \in X^\ast\colon  z\leqslant R^+(x^\ast, s) }
    \end{equation*}
    is closed.
    Thus Condition \ref{cond:max02} states that $R^+(\cdot,s)$ is  upper level-closed in accordance to the scalar characterization in \citep{drapeau01}.
\end{remark}
If $Z$ is not properly preordered, we replace Condition \ref{cond:max05} by
\begin{enumerate}[label=(\roman*${}^\prime$)]
        \setcounter{enumi}{1}
    \item \label{cond:max05bis} $R$ fulfills \eqref{eq:poshomo} and is such that 
        \begin{equation}
            \bigcap_{\tilde{z}<z}\mathcal{L}^c_{ R^+}(\tilde{z})=\bigcap_{\tilde{z}<z}\Set{(x^\ast,s) \in X^\ast\times \R\colon  \tilde{z}\not \in R^+(x^\ast,s)},
        \end{equation}
        is convex for all $z\in Z$;
\end{enumerate}
and still keep the notation $\mathcal{R}^{\max}$ as well as the denomination maximal risk function.
Our main theorem reads as follows.
\begin{theorem}\label{thm:robrep}
    Let $(Z,\leq)$ be a preordered vector space such that $\hat{K}=\set{k\in Z\colon  k>0}\neq \emptyset$.
    Any lower level-closed and quasiconvex function $F\colon X \to \mathcal{K}(Z,K)$ admits the dual representation
    \begin{equation}
        F\left(x\right)= \sup_{x^\ast \in  X^\ast } R\left(x^\ast,\langle x^\ast,x\rangle\right),
        \label{thm:main1}
    \end{equation}
    for a unique $R \in \mathcal{R}^{\max}$.\footnote{From our convention, if $Z$ is additionally properly preordered, Condition \ref{cond:max05} is taken into account instead of \ref{cond:max05bis}}

    Furthermore, if \eqref{thm:main1} holds for another risk function $\tilde{R} \in \mathcal{R}$, then $\tilde{R}\leqslant R$.
\end{theorem}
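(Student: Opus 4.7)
The plan is to construct a canonical candidate $R$ from $F$ via the support functionals of its lower level sets, verify both the representation and the axioms of $\mathcal{R}^{\max}$, establish the maximality inequality against arbitrary $\tilde R \in \mathcal{R}$, and then deduce uniqueness inside $\mathcal{R}^{\max}$ via a bipolar-type argument.

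Concretely, for each $x^\ast \in X^\ast$ and $z \in Z$ I introduce the support functional
\begin{equation*}
    \phi(x^\ast, z) := \sup\bigl\{\langle x^\ast, y\rangle : y \in F^{-1}(z)\bigr\}
\end{equation*}
of the closed convex set $F^{-1}(z)$, take $R_0(x^\ast, s) := \{z \in Z : s \leq \phi(x^\ast, z)\}$ as in Proposition \ref{prop:existence}, and finally set $R(x^\ast, s) := R_0(x^\ast, s)^\bullet$ in order to land in $\mathcal{K}(Z, K)$. Equivalently, $R(x^\ast, \cdot)$ is the inverse in the sense of Theorem \ref{thm:onetoone} of the increasing right-continuous map $z \mapsto (-\infty, \phi(x^\ast, z)]$ viewed in $\mathcal{K}(\R, -\R_+)$. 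The representation (\ref{thm:main1}) then follows from Proposition \ref{prop:existence} together with the commutativity of the $^\bullet$-operator with $\sup$ (Proposition \ref{lem:passage}) and the $^\bullet$-closedness of $F(x)$. As for the axioms of $\mathcal{R}^{\max}$: \ref{cond:max01} is built into the construction; (\ref{eq:poshomo}) comes from $\phi(\lambda x^\ast, z) = \lambda \phi(x^\ast, z)$; \ref{cond:max05}/\ref{cond:max05bis} follows from sublinearity of $x^\ast \mapsto \phi(x^\ast, z)$, since convex combinations of strict inequalities $s_i > \phi(x_i^\ast, z + k_i)$ are preserved; \ref{cond:max02} rewrites as $\{x^\ast : R^+(x^\ast, s) < z\} = \bigcup_{y : F(y) < z} \{x^\ast : \langle x^\ast, y\rangle > s\}$, a union of open halfspaces; and \ref{cond:max03} reduces to $\bigcup_s R(x^\ast, s) = \{z : F^{-1}(z) \neq \emptyset\}$, which is independent of $x^\ast$.

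For the maximality statement, take any $\tilde R \in \mathcal{R}$ satisfying (\ref{thm:main1}). For $z \in R(x^\ast, s)$ and $k \in \hat K$ we have $s \leq \phi(x^\ast, z + k)$ by construction, so for each $t < s$ one can pick $y \in F^{-1}(z + k)$ with $\langle x^\ast, y\rangle > t$. The representation for $\tilde R$ then gives $z + k \in F(y) \subseteq \tilde R(x^\ast, \langle x^\ast, y\rangle) \subseteq \tilde R(x^\ast, t)$, the last inclusion by monotonicity of $\tilde R(x^\ast, \cdot)$ in $s$. Intersecting over $t < s$ and invoking left-continuity of $\tilde R(x^\ast, \cdot)$ yields $z + k \in \tilde R(x^\ast, s)$ for every $k \in \hat K$; the $^\bullet$-closedness of $\tilde R(x^\ast, s)$ then gives $z \in \tilde R(x^\ast, s)$, hence $\tilde R \leqslant R$.

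For uniqueness inside $\mathcal{R}^{\max}$, if $R'$ is another maximal risk function representing $F$ then maximality already yields $R' \leqslant R$. For the reverse direction, introduce $\phi'(x^\ast, z) := \sup\{s \in \R : z \in R'(x^\ast, s)\}$: the representation gives $F^{-1}(z) = \{x : \langle x^\ast, x\rangle \leq \phi'(x^\ast, z) \text{ for all } x^\ast\}$, matching the Hahn--Banach description of $F^{-1}(z)$ through $\phi$. The axioms (\ref{eq:poshomo}), \ref{cond:max05}/\ref{cond:max05bis} and \ref{cond:max02} are then to be unpacked into sublinearity (positive homogeneity plus convexity) and lower semicontinuity of $\phi'(\cdot, z)$, after which the bipolar theorem forces $\phi' = \phi$ and hence $R' = R$. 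This last translation is the main obstacle: in particular, when $Z$ is not properly preordered so that \ref{cond:max05bis} substitutes joint quasiconcavity, the convexity of $\bigcap_{\tilde z < z} \mathcal{L}^c_{R^+}(\tilde z)$ must be propagated slice by slice through the $^\bullet$-closure and the inverse correspondence of Theorem \ref{thm:onetoone} (via Lemma \ref{lem:LFRF}) before the Hahn--Banach argument can conclude.
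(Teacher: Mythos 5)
Your construction of $R$ from the support functionals $\phi(x^\ast,z)=\sigma_{F^{-1}(z)}(x^\ast)$, the verification of the representation via the $^\bullet$-closure, and the maximality argument follow essentially the paper's route (the paper packages $\phi(x^\ast,\cdot)-\R_+$ as a minimal penalty function and obtains $R$ as its inverse; your direct derivation of $\tilde R\leqslant R$ by choosing $y\in F^{-1}(z+k)$ with $\langle x^\ast,y\rangle>t$ and invoking left-continuity of $\tilde R(x^\ast,\cdot)$ is a correct unpacking of the route through Proposition \ref{prop:uniquealpha} and Lemma \ref{lem:minmax}). Two small imprecisions: the map $z\mapsto(-\infty,\phi(x^\ast,z)]$ need not be right-continuous, so your ``equivalently'' requires passing to its right-continuous version first (this is exactly what \eqref{eq:eqAB} checks); and the identification $\bigcup_sR(x^\ast,s)=\set{z\colon F^{-1}(z)\neq\emptyset}$ is not literally correct --- what the construction controls is $\set{z\colon\inf_{k>0}\phi(x^\ast,z+k)>-\infty}$, and the $x^\ast$-independence has to be argued through the left-continuous version as in Condition \ref{cond:min03}.

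The genuine gap is the uniqueness step, which you yourself flag as ``the main obstacle'' and do not carry out --- and it is the heart of the theorem, since existence is already Proposition \ref{prop:existence}. The difficulty is not mere bookkeeping: the bipolar argument as you first state it would fail. The axioms of $\mathcal{R}^{\max}$ (Conditions \ref{cond:max05}/\ref{cond:max05bis} and \ref{cond:max02}) translate, under the inverse correspondence, into positive homogeneity, convexity and lower level-closedness of the \emph{left-continuous} version $\alpha_i^-(\cdot,z)$ of the penalty, not of $\phi'(\cdot,z)$ itself. Consequently the set that Hahn--Banach identifies is not $F^{-1}(z)$ but the a priori $i$-dependent set $A^{i,z}=\set{x\colon\langle x^\ast,x\rangle\eqslantless\alpha_i^-(x^\ast,z)\text{ for all }x^\ast}\supseteq F^{-1}(z)$, and one must show that $\bigcup_{\tilde z<z}A^{i,\tilde z}$ collapses to the intrinsic set $H(z)=\set{x\colon F(x)\leqslant\tilde z\text{ for some }\tilde z<z}$; this is the computation \eqref{eq:blabla02}, which hinges on absorbing the $^\bullet$-closure into the quantifier ``for some $\tilde z<z$''. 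Only then does left-continuity give $\alpha_i^-(x^\ast,z)=\sigma_{H(z)}(x^\ast)-\R_+$ independently of $i$, and right-continuity recovers $\alpha_i=(\alpha_i^-)^+$ and hence $R_1=R_2$. The translation of the axioms that you defer is precisely the content of Proposition \ref{prop:onetooneRalpha}, a full proof in its own right; asserting that it ``must be propagated slice by slice'' without doing so leaves the uniqueness --- the statement's actual novelty --- unproved.
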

\begin{remark}
    The second assertion in Theorem \ref{thm:robrep} justifies the term ``maximal'' risk function: It gives an alternative criterion of uniqueness in terms of maximality within the class of risk functions.
\end{remark}
\begin{remark}
    In the scalar case, \citep[Proposition 6, p.40]{drapeau01} states a one-to-one relation between further properties of $F$---such as translation invariance, positive homogeneity, scaling invariance, etc.---and the respective ones of the maximal risk function $R$.
    Analogue results can be obtained in the set-valued case adapting the proof argumentation in \citep{drapeau01} to the set-valued techniques of the present paper.
    However, an introduction and discussion of these properties in connection with applications in Economics and Finance are best discussed in a separate paper.
\end{remark}
Before addressing the proof of Theorem \ref{thm:robrep}, we introduce minimal penalty functions, which are dual to maximal risk functions, and link them to the support functions of $F^{-1}(z)$.
%
%
%Before addressing the proof of Theorem \ref{thm:robrep}, we first state some preparatory results, namely, 
%\begin{itemize}
%    \item a duality between maximal risk functions and minimal penalty functions, and;
%    \item the right-continuous version of the support function of $z \mapsto F^{-1}(z)$ is a minimal penalty function.
%\end{itemize}

\subsection{Minimal Penalty and Maximal Risk Functions}
If $Z$ is properly preordered, a \emph{minimal penalty function} is a function $\alpha\colon  X^\ast \times Z\to \mathcal{K}(\mathbb{R},-\mathbb{R}_{+})$ such that 
\begin{enumerate}[label=(\alph*)]
    \item \label{cond:min01} $z\mapsto \alpha(x^\ast,z)$ is increasing and right-continuous for every $x^\ast \in X^\ast$;
    \item \label{cond:min05} $\alpha(\lambda x^\ast,z)=\lambda \alpha(x^\ast,z)$ and $x^\ast \mapsto \alpha(x^\ast,z)$ is convex for every $z\in Z$ and $\lambda >0$;
    \item \label{cond:min02} $x^\ast \mapsto \alpha^-(x^\ast,z)$ is lower level-closed, for every $z\in Z$;
    \item \label{cond:min03} $\alpha^-\left( x^\ast,z \right)=\emptyset$ for some $x^\ast \in X^\ast$ and $z \in Z$, then $\alpha^-\left( \cdot,z \right)\equiv\emptyset$ on $ X^\ast$.
\end{enumerate}
The set of minimal penalty functions is denoted by $\mathcal{P}^{\min}$.
If $Z$ is not properly preordered, we replace Condition \ref{cond:min05} by
\begin{enumerate}[label=(\alph*${}^\prime$)]
        \setcounter{enumi}{1}
    \item\label{cond:min05bis} $\alpha(\lambda x^\ast,z)=\lambda \alpha(x^\ast,z)$ and $x^\ast \mapsto \alpha^-(x^\ast,z)$ is convex for every $z\in Z$ and $\lambda >0$;
\end{enumerate}
and here also keep the notation $\mathcal{P}^{\min}$ as well as the denomination minimal penalty function.

\begin{proposition}\label{prop:onetooneRalpha}
    The inverse in the second argument states a one-to-one relation between $\mathcal{R}^{\max}$ and $\mathcal{P}^{\min}$.
    In other words, 
    \begin{align*}
        (x^\ast, z) \mapsto [\alpha(x^\ast,\cdot)]^{-1}(z) &\in \mathcal{R}^{\max}\quad \text{for every }\alpha \in \mathcal{P}^{\min};\\
        (x^\ast, s )\mapsto [R(x^\ast,\cdot)]^{-1}(s) &\in \mathcal{P}^{\min}\quad \text{for every }R \in \mathcal{R}^{\max}.
    \end{align*}
\end{proposition}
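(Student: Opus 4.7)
The approach is to verify, condition by condition, that the inverse map $\alpha(x^\ast,z) := [R(x^\ast,\cdot)]^{-1}(z) = \set{s \in \R : z \in R(x^\ast,s)}$ is an involutive bijection between $\mathcal{R}^{\max}$ and $\mathcal{P}^{\min}$, using as backbone the elementary equivalence $z \in R(x^\ast, s)$ if and only if $s \in \alpha(x^\ast, z)$. First, for each fixed $x^\ast \in X^\ast$, I would apply Theorem \ref{thm:onetoone} to $s \mapsto R(x^\ast, s)$, which by \ref{cond:max01} is increasing and left-continuous from $\R$ to $\mathcal{K}(Z,K)$ (so the theorem applies with $X = \R$, $C = \R_+$). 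It directly yields that $\alpha(x^\ast, \cdot)$ is increasing and right-continuous from $Z$ to $\mathcal{K}(\R,-\R_+)$---this is \ref{cond:min01}---together with the pointwise involution $(R(x^\ast, \cdot)^{-1})^{-1} = R(x^\ast, \cdot)$.

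Next, I would match \ref{cond:max05}/\ref{cond:max05bis} with \ref{cond:min05}/\ref{cond:min05bis}. Positive homogeneity is a direct substitution $t = s/\lambda$ in $\alpha(\lambda x^\ast, z) = \set{s : z \in R(x^\ast, s/\lambda)} = \lambda\,\alpha(x^\ast, z)$, and the reverse direction is identical. For the convexity/quasiconcavity equivalence in the properly preordered case, I would invoke Proposition \ref{prop:quasiconvex}, which recasts joint quasiconcavity of $R$ as convexity of
\begin{equation*}
\mathcal{L}^c_R(z) = \Set{(x^\ast, s) : z \notin R(x^\ast, s)} = \Set{(x^\ast, s) : s \notin \alpha(x^\ast, z)}.
\end{equation*}
Since elements of $\mathcal{K}(\R,-\R_+)$ are half-lines $(-\infty, t]$ (or $\emptyset$ or $\R$), this set is the strict epigraph of the scalar upper bound $x^\ast \mapsto \sup \alpha(x^\ast, z)$, whose convexity is exactly convexity of $x^\ast \mapsto \alpha(x^\ast, z)$ in the $\eqslantless$-sense. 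The non-properly preordered version \ref{cond:max05bis} $\Leftrightarrow$ \ref{cond:min05bis} proceeds analogously but with $R^+$, $\alpha^-$ in place of $R$, $\alpha$, the passage between the two being supplied by \eqref{eq:C2}.

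For \ref{cond:max02} $\Leftrightarrow$ \ref{cond:min02}, I would apply \eqref{eq:C2} pointwise in $x^\ast$ to get $R^+(x^\ast, s) < z$ if and only if $s < \alpha^-(x^\ast, z)$. Thus $\set{x^\ast : R^+(x^\ast, s) < z\}$ and the complement of the $s$-lower level set of $\alpha^-(\cdot, z)$ coincide, so openness of the former for all $(s, z)$ is equivalent to lower level-closedness of $\alpha^-(\cdot, z)$ for every $z$. For \ref{cond:max03} $\Leftrightarrow$ \ref{cond:min03}, the basic equivalence gives $\bigcup_s R(x^\ast, s) = \set{z : \alpha(x^\ast, z) \neq \emptyset}$, so \ref{cond:max03} is exactly the statement that this effective domain is independent of $x^\ast$; passing to left-continuous versions via Lemma \ref{lem:LFRF} then translates this into the conditional vanishing of $\alpha^-$ required in \ref{cond:min03}.

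The hard part is the convexity/quasiconcavity step in the non-properly-preordered case, where Proposition \ref{prop:quasiconvex} does not apply off-the-shelf and one must reconcile, using \eqref{eq:C1}--\eqref{eq:C2} and the $^\bullet$-closure operator, the intersection $\bigcap_{\tilde z < z} \mathcal{L}^c_{R^+}(\tilde z)$ appearing in \ref{cond:max05bis} with the $\eqslantless$-convexity of $\alpha^-(\cdot, z)$ demanded in \ref{cond:min05bis}; once this is established, the remaining correspondences reduce to bookkeeping on top of the pointwise application of Theorem \ref{thm:onetoone}.
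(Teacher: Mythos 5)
Your proposal is correct and follows essentially the same route as the paper's proof: a condition-by-condition translation built on the pointwise application of Theorem \ref{thm:onetoone}, the substitution $t=s/\lambda$ for positive homogeneity, and Relations \eqref{eq:C1}--\eqref{eq:C2} together with the totality of $\eqslantless$ on $\mathcal{K}(\R,-\R_+)$ to identify epigraphs/level sets of $\alpha$, $\alpha^-$ with complements of lower level sets of $R$, $R^+$. The step you single out as hard is resolved in the paper by exactly the computation you gesture at, namely $\epi \alpha^-(\cdot,z)=\Set{(x^\ast,s)\colon s<\alpha^-(x^\ast,z)}^c=\Set{(x^\ast,s)\colon R^+(x^\ast,s)<z}^c=\bigcap_{\tilde z<z}\mathcal{L}^c_{R^+}(\tilde z)$, combined with the observation that convexity of $\alpha^-(\cdot,z)$ is equivalent to convexity of its epigraph because it takes values in $\mathcal{K}(\R,-\R_+)$.
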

\begin{proof}
    Let $\alpha \in \mathcal{P}^{\min}$ and $R \in \mathcal{R}^{\max}$.
    To simplify notations, we denote by $\alpha^{-1}$ and $R^{-1}$ the inverse in the second argument of $\alpha$ and $R$ respectively.
    From Theorem \ref{thm:onetoone} it follows that $\alpha$ fulfills \ref{cond:min01} if, and only if, $\alpha^{-1}$ fulfills \ref{cond:max01}.
    And therefore $R$ fulfills \ref{cond:max01} if, and only if, $R^{-1}$ fulfills \ref{cond:min01}.
    Hence, we just have to show that $\alpha \in \mathcal{P}^{\min}$ if, and only if, $R=\alpha^{-1} \in \mathcal{R}^{\max}$.

    \begin{enumerate}[label=\textit{Step \arabic*:},fullwidth]

        \item Equivalence between \ref{cond:min05} and \ref{cond:max05} or \ref{cond:min05bis} and \ref{cond:max05bis}.
            First, $\alpha(\lambda x^\ast,z)=\lambda \alpha(x^\ast,z)$ for every $z$ is equivalent to
            \begin{equation*}
                s \eqslantless \alpha(\lambda x^\ast, z)\quad \text{if, and only if,}\quad s/\lambda \eqslantless \alpha(x^\ast,z)
            \end{equation*}
            for every $s\in \mathbb{R}$ and $z\in Z$.
            By means of \eqref{eq:C1}, this is equivalent to 
            \begin{equation*}
                R(\lambda x^\ast,s)\leqslant z\quad \text{if, and only if,}\quad R(x^\ast,s/\lambda)\leqslant z
            \end{equation*}
            for every $s\in \mathbb{R}$ and $z \in Z$, that is, $R(\lambda x^\ast,s)=R(x^\ast, s/\lambda)$.

            Further, in case of \ref{cond:min05} and \ref{cond:max05}, by means of Proposition \ref{prop:quasiconvex}, $\alpha(\cdot,z)$ is convex if, and only if, $\epi \alpha(\cdot,z)=\set{(x^\ast,s)\in X^\ast\times \mathbb{R}\colon  \alpha(x^\ast, z)\eqslantless s}$ is convex for every $z$.
            From the order totality of $\mathcal{K}(\mathbb{R},-\mathbb{R}_+)$, inspection shows that this holds if, and only if,
            \begin{equation*}
               \Set{(x^\ast,s)\in X^\ast\times \R\colon  \max \alpha(x^\ast,z) <s }= \Set{(x^\ast,s)\in X^\ast\times \R\colon  s \not \in \alpha(x^\ast,z) }
            \end{equation*}
            is convex for every $z$.
            Relation \ref{eq:C1}, yields
            \begin{equation*}
                \Set{(x^\ast,s)\in X^\ast\times \R\colon s \not \in \alpha(x^\ast,z)}=\Set{(x^\ast,s)\in X^\ast\times \R\colon  z \not \in R(x^\ast,s)}=\mathcal{L}^c_{R}(z)
            \end{equation*}
            which shows that $\alpha(\cdot,z)$ is convex for every $z$ if, and only if, $R$ is jointly quasiconcave.

            In case of \ref{cond:min05bis} and \ref{cond:max05bis}, using Relation \eqref{eq:C2}, it follows that
            \begin{align*}
                \epi \alpha^-(\cdot,z)&=\Set{(x^\ast,s) \in X^\ast\times \R\colon  \alpha^-(x^\ast,z)\eqslantless s}\\
                &=\Set{(x^\ast,s) \in X^\ast\times \R\colon  s< \alpha^-(x^\ast,z)}^c\\
                &=\Set{(x^\ast,s) \in X^\ast\times \R\colon R^+(x^\ast,s)<z}^c=\bigcap_{\tilde{z}<z}\mathcal{L}^c_{R^+}(\tilde{z}).
            \end{align*}
            Hence, $\alpha$ fulfills Condition \ref{cond:min05bis} if, and only if, $R$ fulfills Condition \ref{cond:max05bis}.

        \item Equivalence between \ref{cond:min02} and \ref{cond:max02}.
            The function $\alpha^-(\cdot,z)$ is lower level-closed for every $z\in Z$ if and only $\mathcal{L}_{\alpha^-(\cdot,z)}^c(s)$ is open for every $s \in \R$ and $z \in Z$.
            However, using Relation \eqref{eq:C2}, and the fact that $\eqslantless$ is total on $\mathcal{K}(\mathbb{R},-\mathbb{R}_+)$, it follows that
            \begin{align*}
                \mathcal{L}^c_{\alpha^-(\cdot,z)}(s)&=\Set{x^\ast \in X^\ast\colon  \alpha^-(x^\ast,z)\eqslantless s}^c\\
                &=\Set{x^\ast \in X^\ast\colon  s< \alpha^-(x^\ast,z)}\\
                &=\Set{x^\ast \in X^\ast\colon R^+(x^\ast,s) <z}.
            \end{align*}
            This shows that $\alpha^-(\cdot,z)$ is lower level-closed if, and only if, Condition \ref{cond:max02} holds.

        \item Equivalence between \ref{cond:min03} and \ref{cond:max03}.
            Let $A(x^\ast):=\Set{z \in Z\colon \alpha(x^\ast,z)=\emptyset}$.
            Condition \ref{cond:min03} is equivalent to $A(x^\ast)=A(\tilde{x}^\ast)$ for all $x^\ast,\tilde{x}^\ast \in X^\ast$, and hence also to $A^c(x^\ast)=A^c(\tilde{x}^\ast)$ for every $x^\ast,\tilde{x}^\ast \in X^\ast$.
            However, using Relation \eqref{eq:C1}, we obtain
            \begin{align*}
                A^c(x^\ast)&=\Set{z \in Z\colon \alpha(x^\ast,z)\neq \emptyset}=\Set{z \in Z\colon s\eqslantless \alpha(x^\ast,z)\text{ for some }s \in \R}\\
                &=\bigcup_{s \in \R}\Set{z \in Z\colon R(x^\ast,s)\leqslant z}=\bigcup_{s \in \R}R(x^\ast,s),
            \end{align*}
            which shows that \ref{cond:min03} is equivalent to \ref{cond:max03}.
    \end{enumerate}
\end{proof}

\subsection{Minimal Penalty and Support Functions}
We denote by $\sigma_{ A }\left( x^\ast \right):=\sup_{x\in A} \langle x^\ast,x\rangle$ for $x^\ast \in  X^\ast$ the support function of $A\subseteq X$ with $\sigma_\emptyset(x^\ast)=-\infty$ for every $x^\ast \in X^\ast$.
\begin{proposition}\label{prop:uniquealpha}
    Let $A\subseteq X$ be a closed convex set.
    Then, there exists a unique function $\alpha_A\colon X^\ast \to \mathcal{K}(\R,-\R_+)$, satisfying
    \begin{enumerate}
        \item \label{cond:pen03} $\alpha_A$ is convex, positively homogeneous and lower level-closed;
        \item \label{cond:pen04} if $\alpha_A\left( x^\ast \right)=\emptyset$ for some $x^\ast \in X^\ast$, then $\alpha_A\equiv\emptyset$ on $ X^\ast$;
        \item \label{cond:pen05} for all $x \in X $ holds
            \begin{equation}
                x\in A  \quad \text{if, and only if,}\quad  \langle x^\ast, x\rangle \eqslantless \alpha_A\left( x^\ast  \right)\quad\text{for all }x^\ast\in X^\ast.
                \label{cond:pen01} 
            \end{equation}
    \end{enumerate}
    This function is given by
    \begin{equation}
        \alpha_{A}(x^\ast):=\sigma_{A}(x^\ast)-\R_+,\quad x^\ast \in X^\ast.
        \label{eq:minpenfunction}
    \end{equation}
    Moreover, if $\alpha\colon X^\ast \to \mathcal{K}(\R,-\R_+)$ satisfies \eqref{cond:pen01}, then $\alpha_{A}\eqslantless \alpha$.
\end{proposition}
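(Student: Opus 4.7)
The plan is to verify, in order, the concrete formula \eqref{eq:minpenfunction}, the three stated properties, the minimality assertion, and finally uniqueness. A single preliminary observation underlies all four parts: since $\alpha_A(x^\ast)=\sigma_A(x^\ast)-\R_+$ belongs to $\mathcal{K}(\R,-\R_+)$, the convention in Remark \ref{rem:blabla01} rewrites the relation $\langle x^\ast,x\rangle\eqslantless\alpha_A(x^\ast)$ as the scalar inequality $\langle x^\ast,x\rangle\leq\sigma_A(x^\ast)$.

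Using this observation, the characterisation \eqref{cond:pen01} becomes the classical Hahn-Banach statement that a nonempty closed convex $A$ equals the intersection of the closed half-spaces $\{x\colon\langle x^\ast,x\rangle\leq\sigma_A(x^\ast)\}$, while the convention $\sigma_\emptyset\equiv-\infty$ takes care of the case $A=\emptyset$. Properties \ref{cond:pen03} and \ref{cond:pen04} then reduce to standard facts about support functions: convexity of $\alpha_A$ as a set-valued map is just sublinearity of $\sigma_A$, positive homogeneity is immediate from $\sigma_A(\lambda x^\ast)=\lambda\sigma_A(x^\ast)$, and lower level-closedness is the lower semicontinuity of $\sigma_A$ as a supremum of continuous linear functionals; finally $\alpha_A(x^\ast)=\emptyset$ occurs exactly when $\sigma_A(x^\ast)=-\infty$, i.e.\ when $A=\emptyset$, in which case $\alpha_A\equiv\emptyset$ on all of $X^\ast$.

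For minimality, let $\alpha$ satisfy \eqref{cond:pen01}. The case $A=\emptyset$ is trivial. Otherwise, for every $a\in A$ the hypothesis gives $\langle x^\ast,a\rangle\in\alpha(x^\ast)$, and taking the supremum over $a\in A$ yields $\sigma_A(x^\ast)\leq\sup\alpha(x^\ast)$; since $\alpha(x^\ast)\in\mathcal{K}(\R,-\R_+)$ is a closed left-infinite ray (possibly all of $\R$ or $\emptyset$), this gives $\alpha_A(x^\ast)=(-\infty,\sigma_A(x^\ast)]\subseteq\alpha(x^\ast)$, i.e.\ $\alpha_A\eqslantless\alpha$.

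Uniqueness is the main obstacle. Given $\alpha$ satisfying all three properties, I would set $\beta(x^\ast):=\sup\alpha(x^\ast)\in[-\infty,+\infty]$; property \ref{cond:pen03} translates into $\beta$ being sublinear and lower semicontinuous on $X^\ast$. If some $\alpha(x^\ast_0)=\emptyset$, property \ref{cond:pen04} forces $\alpha\equiv\emptyset$, and then \eqref{cond:pen01} gives $A=\emptyset$ and $\alpha=\alpha_A$. Otherwise $\beta>-\infty$ everywhere, so $\beta$ is a proper sublinear LSC function on $X^\ast$; the Fenchel-Moreau / bipolar theorem then identifies $\beta$ with the support function of its dual set $\{x\colon\langle x^\ast,x\rangle\leq\beta(x^\ast)\text{ for all }x^\ast\}$, which by \eqref{cond:pen01} is precisely $A$. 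Therefore $\beta=\sigma_A$ and $\alpha(x^\ast)=(-\infty,\beta(x^\ast)]=\alpha_A(x^\ast)$. The delicate ingredient is this last bipolar step, which implicitly requires $X^\ast$ to carry a topology compatible with the dual pairing so that the duality between sublinear LSC functions on $X^\ast$ and closed convex subsets of $X$ applies.
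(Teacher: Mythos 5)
Your proof is correct and follows essentially the same route as the paper: both reduce the set-valued statement to the scalar characterization of support functions by identifying each element $\alpha(x^\ast)\in\mathcal{K}(\R,-\R_+)$ with its supremum in $[-\infty,+\infty]$. The only difference is that the paper outsources the scalar existence, uniqueness and minimality to a cited result (Lemma C.3 of Drapeau--Kupper) and merely verifies the translation of the three conditions, whereas you re-derive the scalar step via Hahn--Banach and the bipolar theorem; the topological caveat you raise about $X^\ast$ is precisely the standing hypothesis under which that cited lemma operates.
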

\begin{proof}
    Based on a classical separation argument, \citep[Lemma C.3]{drapeau01} states that $\sigma_A$ is the unique function from $X^\ast$ to $[-\infty,\infty]$ which is lower level-closed, positively homogeneous, convex, such that $\sigma_A(x^\ast)=-\infty$ for some $x^\ast \in X^\ast$ if, and only if, $\sigma_A\equiv -\infty$, that is $A=\emptyset$, and such that
    \begin{equation}\label{eq:bloblo}
        x\in A  \quad \text{if, and only if,}\quad  \langle x^\ast, x\rangle \leq \sigma_A\left( x^\ast  \right)\quad\text{for all }x^\ast\in X^\ast.
    \end{equation}
    Further, if $\sigma\colon X^\ast \to [-\infty,\infty]$ satisfies \eqref{eq:bloblo} then $\sigma_A \leq \sigma$.
    We are then left to show that $\alpha_A:=\sigma_A -\mathbb{R}_+$ satisfies Conditions \ref{cond:pen03}, \ref{cond:pen04} and \ref{cond:pen05}.
    The fact that $\alpha_A$ is positively homogeneous is immediate.
    By means of Proposition \ref{prop:quasiconvex}, the convexity follows from 
    \begin{equation*}
        \epi \alpha_A=\Set{(x^\ast,s)\in X^\ast\times \mathbb{R}\colon  \alpha_A(x^\ast)\eqslantless s}=\Set{(x^\ast,s)\in X^\ast\times \mathbb{R}\colon  \sigma_A(x^\ast)\leq s}
    \end{equation*}
    being convex.
    The lower semi-continuity follows from
    \begin{equation*}
        \mathcal{L}_{\alpha_A}(s)=\Set{x^\ast \in X^\ast\colon  \alpha_A(x^\ast)\eqslantless s}=\Set{x^\ast \in X^\ast\colon  \sigma(x^\ast)\leq s}
    \end{equation*}
    being closed for every $s \in \mathbb{R}$.
    Conditions \ref{cond:pen04} and \ref{cond:pen05} are immediate.
\end{proof}
\begin{proposition}\label{prop:minpen}
    Let $F\colon X \to \mathcal{K}(Z,K)$ be a lower level-closed quasiconvex function.
    Then, $(x^\ast ,z)\mapsto \alpha_{F^{-1}(z)}(x^\ast)$ is a function from $X^\ast \times Z$ to $\mathcal{K}(\mathbb{R},-\mathbb{R}_+)$ which is increasing in the second argument $z$.
    Furthermore, its right-continuous version $\alpha(x^\ast,z):=\inf_{\tilde{z}>z} \alpha_{F^{-1}(z)}(x^\ast)$ is a minimal penalty function.
    In particular, if $Z$ is properly preordered, $x^\ast \mapsto \alpha(x^\ast, z)$ is convex for every $z \in Z$.\footnote{Recall that the concept of a minimal penalty function includes the convexity Condition \ref{cond:min05} instead of \ref{cond:min05bis} if $Z$ is properly preordered.}
\end{proposition}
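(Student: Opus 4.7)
The plan is to verify each ingredient of the definition of a minimal penalty function for the explicit $\alpha$, pulling pointwise-in-$z$ properties from Proposition \ref{prop:uniquealpha} and $z$-monotonicity from the $\subseteq$-monotonicity of $F^{-1}$.

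First I would observe that lower level-closedness of $F$ together with Remark \ref{rem:fdslkjhi} and Proposition \ref{prop:quasiconvex}\ref{prop:quasiconvex01} give each $F^{-1}(z)=\mathcal{L}_F(z)$ closed and convex, so Proposition \ref{prop:uniquealpha} yields $\alpha_{F^{-1}(z)}=\sigma_{F^{-1}(z)}-\mathbb{R}_+$ in $\mathcal{K}(\mathbb{R},-\mathbb{R}_+)$ with $x^\ast\mapsto \alpha_{F^{-1}(z)}(x^\ast)$ convex, positively homogeneous, and lower level-closed. Monotonicity in $z$ is then immediate: $F(x)+K=F(x)$ forces $F^{-1}(z_1)\subseteq F^{-1}(z_2)$ for $z_1\le z_2$, and $\sigma_{F^{-1}(z_1)}\le\sigma_{F^{-1}(z_2)}$ pointwise gives $\alpha_{F^{-1}(z_1)}(x^\ast)\eqslantless \alpha_{F^{-1}(z_2)}(x^\ast)$.

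For $\alpha(x^\ast,z)=\inf_{\tilde z>z}\alpha_{F^{-1}(\tilde z)}(x^\ast)$, monotonicity and right-continuity in $z$ are built in (cf.\ Lemma \ref{lem:LFRF}), and positive homogeneity in $x^\ast$ passes through the intersection since $\lambda>0$. For the convexity required by \ref{cond:min05}, I would use proper preorderedness to produce, for any $\tilde z_1,\tilde z_2>z$, some $k>0$ with $\tilde z_i-z>k$, so that $\tilde z_3:=z+k$ satisfies $z<\tilde z_3<\tilde z_i$; hence $\{\alpha_{F^{-1}(\tilde z)}(\cdot)\}_{\tilde z>z}$ is downward-$\eqslantless$-directed, and its infimum is convex because the corresponding epigraphs form an upward-directed union of convex sets. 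In the general case the weaker \ref{cond:min05bis} comes for free since $\alpha^-$ is a pointwise supremum (as derived in the next paragraph) of convex functions.

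The main technical step is condition \ref{cond:min02}. Applying the identity $(g^+)^-=g^-$ from Lemma \ref{lem:LFRF} to the increasing function $z\mapsto g(z):=\alpha_{F^{-1}(z)}(x^\ast)$ yields the simplification
\begin{equation*}
\alpha^-(x^\ast,z)=\sup_{\hat z<z}\alpha_{F^{-1}(\hat z)}(x^\ast).
\end{equation*}
The universal property of the supremum in $\mathcal{K}(\mathbb{R},-\mathbb{R}_+)$ then collapses the level set to
\begin{equation*}
\Set{x^\ast\colon \alpha^-(x^\ast,z)\eqslantless s}=\bigcap_{\hat z<z}\Set{x^\ast\colon \alpha_{F^{-1}(\hat z)}(x^\ast)\eqslantless s},
\end{equation*}
an intersection of closed sets by the lower level-closedness from Proposition \ref{prop:uniquealpha}, hence closed. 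Condition \ref{cond:min03} follows since, by Proposition \ref{prop:uniquealpha}, $\alpha_{F^{-1}(\hat z)}(x^\ast)=\emptyset$ is equivalent to $F^{-1}(\hat z)=\emptyset$, an $x^\ast$-independent condition; therefore $\alpha^-(x^\ast,z)=\emptyset$ occurs precisely when $F^{-1}(\hat z)=\emptyset$ for every $\hat z<z$, and this condition does not involve $x^\ast$ at all.
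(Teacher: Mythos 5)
Your proposal is correct and follows essentially the same route as the paper: pointwise properties of $\alpha_{F^{-1}(z)}$ from Proposition \ref{prop:uniquealpha}, monotonicity in $z$ from that of $F^{-1}$, the identity $\alpha^-(x^\ast,z)=\sup_{\hat z<z}\alpha_{F^{-1}(\hat z)}(x^\ast)$ via Lemma \ref{lem:LFRF} to reduce \ref{cond:min02} and \ref{cond:min05bis} to intersections of closed (resp.\ convex) epigraphs and level sets, the directed-union-of-epigraphs argument under proper preorderedness for \ref{cond:min05}, and the $x^\ast$-independence of emptiness for \ref{cond:min03}. The only (immaterial) difference is that you derive \ref{cond:min03} from the explicit formula $\alpha_A=\sigma_A-\mathbb{R}_+$ rather than from Condition \ref{cond:pen04} of Proposition \ref{prop:uniquealpha}.
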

\begin{proof}
    \begin{enumerate}[label=\textit{Step \arabic*:}, fullwidth]
        \item By definition, $\alpha_{F^{-1}(\cdot)}(\cdot)$ maps from $X^\ast \times Z$ to $\mathcal{K}(\mathbb{R}, -\mathbb{R}_+)$.
            By Proposition \ref{prop:inverse}, $F^{-1}\colon Z \to \mathcal{P}(X,-\set{0})$ is increasing so that $\alpha_{F^{-1}(\cdot)}(\cdot)$ is increasing in the second argument.
            Hence, $\alpha$ fulfills condition \ref{cond:min01}. 
        \item By Proposition \ref{prop:uniquealpha}, $\alpha_{F^{-1}(z)}(\cdot)$ is convex and positively homogeneous for every $z$.
            The positive homogeneity of $\alpha_{F^{-1}(z)}(\cdot)$ for every $z \in Z$ implies
            \begin{equation*}
                \alpha(\lambda x^\ast,z)=\inf_{\tilde{z}>z}\alpha_{F^{-1}(\tilde{z})}(\lambda x^\ast)=\inf_{\tilde{z}>z}\lambda \alpha_{F^{-1}(\tilde{z})}(x^\ast)=\lambda \inf_{\tilde{z}>z}\alpha_{F^{-1}(\tilde{z})}(x^\ast)=\lambda \alpha(x^\ast,z)
            \end{equation*}
            showing that $\alpha(\cdot,z)$ is positively homogeneous for every $z$.

            Convexity implies that $\epi \alpha_{F^{-1}}(\cdot, z)$ is convex for every $z$.
            Hence, 
            \begin{equation*}
                \epi \alpha^-(\cdot,z)=\set{(x^\ast,z)\in X^\ast \times Z\colon  \alpha_{F^{-1}(\tilde{z})}(x^\ast)\eqslantless s\text{ for all }\tilde{z}< z}=\bigcap_{\tilde{z}<z} \epi \alpha_{F^{-1}(\tilde{z})}(\cdot)
            \end{equation*}
            which is convex.
            Thus, $\alpha^-$ is convex showing that $\alpha$ fulfills Condition \ref{cond:min05bis}.
            If $Z$ is additionally properly preordered,
            \begin{equation*}
                \epi \alpha(\cdot, z)=\Set{(x^\ast,s) \in X^\ast\times \R\colon \alpha_{F^{-1}(\tilde{z})}(x^\ast)\eqslantless s\text{ for some }\tilde{z} > z}=\bigcup_{\tilde{z}>z}\epi \alpha_{F^{-1}(\tilde{z})}(\cdot),
            \end{equation*}
            is also convex.
            Indeed, for every $z^1,z^2>z$ there exists $z^3>z$ with $z^1\geq z^3$ and $z^2\geq z^3$.
            Hence, the union of convex sets on the right-hand side is convex showing that $\alpha(\cdot, z)$ is convex for every $z$.
            Thus, if $Z$ is properly preordered, $\alpha$ fulfills Condition \ref{cond:min05}.
        \item By Proposition \ref{prop:uniquealpha}, $\alpha_{F^{-1}(z)}(\cdot)$ is upper level-closed, that is $\mathcal{L}_{\alpha_{F^{-1}(z)}(\cdot)}(s)$ is closed for every $s$.
            However
            \begin{equation*}
                \mathcal{L}_{\alpha^-(\cdot,z)}(s)=\Set{x^\ast \in X^\ast\colon \alpha_{F^{-1}(\tilde{z})}(x^\ast)\eqslantless s\text{ for all }\tilde{z}<z}=\bigcap_{\tilde{z}<z}\mathcal{L}_{\alpha_{F^{-1}(\tilde{z})}(\cdot)}(s)
            \end{equation*}
            showing that $\mathcal{L}_{\alpha^-(\cdot,z)}(s)$ is closed for every $s$.
            Therefore, together with \eqref{eq:LFRF02}, $\alpha$ fulfills Condition \ref{cond:min02}.
        \item By monotonicity, from $\alpha^-(x^\ast,z)=\emptyset$ follows $\alpha_{F^{-1}}(x^\ast, \tilde{z})=\emptyset$ for every $\tilde{z}<z$.
            However, $\alpha_{F^{-1}}(\cdot, \tilde{z})$ fulfills Condition \ref{cond:pen04} of Proposition \ref{prop:uniquealpha}.
            Hence, $\alpha_{F^{-1}}(\tilde{x}^\ast,\tilde{z})=\emptyset$ for every $\tilde{x}^\ast$ and $\tilde{z}<z$, and by definition $\alpha^-(\tilde{x}^\ast, z)=\emptyset$ for every $\tilde{x}^\ast$.
            Together with \eqref{eq:LFRF02}, $\alpha$ fulfills Condition \ref{cond:min03}.
    \end{enumerate}
Thus $\alpha$ is a minimal penalty function.
\end{proof}

\subsection{Proof of the Duality Theorem \ref{thm:robrep}}
%Let $F\colon X \to \mathcal{K}(Z,K)$ be a lower level-closed quasiconvex function where $(Z,\leq)$ is a preordered vector space such that $\hat{K}=\set{k\in Z\colon  k>0}\neq \emptyset$.
\begin{proof}[Theorem \ref{thm:robrep}]
    \begin{enumerate}[label=\textit{Step \arabic*:},fullwidth]
        \item First, we show the existence of the dual representation \eqref{thm:main1} for some maximal risk function $R \in \mathcal{R}^{\max}$.
            Recall that $\alpha\colon X^\ast \times Z\to \mathcal{K}(\mathbb{R},-\mathbb{R}_+)$ is the right-continuous version of $z\mapsto \alpha_{F^{-1}(z)}(x^\ast)$ for every $x^\ast \in X^\ast$.
            Proposition \ref{prop:minpen} implies that $\alpha \in \mathcal{P}^{\min}$ so that by Proposition \ref{prop:minpen}, 
            \begin{equation*}
                (x^\ast,s)\longmapsto R(x^\ast,s)=\left[\alpha(x^\ast,\cdot)\right]^{-1}(s)=\Set{z\in Z \colon  s \eqslantless \alpha\left( x^\ast, z \right) }\in\mathcal{K}(Z,K)  
            \end{equation*}
            is a maximal risk function.
            Further, for fixed $s \in \mathbb{R}$ and $x^\ast \in X^\ast$, let us show that
            \begin{equation}
                \Set{z\in Z \colon  s \eqslantless \alpha^-\left( x^\ast, z  \right) }^\bullet= \Set{z\in Z \colon  s \eqslantless \alpha_{F^{-1}(z)}\left( x^\ast  \right) }^\bullet=R(x^\ast,s).
                \label{eq:eqAB}
            \end{equation}
            Indeed, let $A:=\Set{z\in Z \colon  s \eqslantless \alpha^-\left( x^\ast, z \right) }^\bullet$, $B:=\Set{z\in Z \colon  s\eqslantless \alpha_{F^{-1}(z)}\left( x^\ast  \right) }^\bullet$, and $C:=R(x^\ast,s)=\Set{z\in Z \colon  s\eqslantless \alpha\left( x^\ast, z \right)}^\bullet$.\footnote{Recall that $R(x^\ast,s)\in  \mathcal{K}(Z,K)$.}
            From $\alpha^-\eqslantless \alpha_{F^{-1}}\eqslantless \alpha$, it follows that $C\leqslant B\leqslant A$.
            Conversely, for every $z \in C$ and $k>0$, Relation \eqref{eq:LFRF01} yields $s \eqslantless  \alpha(x^\ast,z ) \eqslantless  \alpha^-(x^\ast, z+k)$.
            Hence, $z+k \in \set{z \in Z\colon s\eqslantless \alpha^-(x^\ast,z)}$ for every $k>0$, that is, $z\in A$.
            Thus, $A=B=C$ showing \eqref{eq:eqAB}.
            
            Since $F$ maps to $\mathcal{K}(Z,K)$, using Proposition \ref{lem:passage} and Relations \eqref{cond:pen01}, \eqref{eq:eqAB}, the same argumentation as in the proof of Proposition \ref{prop:existence} yields
            \begin{equation}
                \begin{split}
                 F(x)&=[F(x)]^\bullet =\Set{z \in  Z  \colon  x \in F^{-1}(z)}^\bullet=\Set{z\in Z \colon  \langle x^\ast,x\rangle \eqslantless \alpha_{F^{-1}(z)}\left( x^\ast  \right) \text{ for all }x^\ast\in X^\ast}^\bullet\\
                &=\left( \bigcap_{x^\ast \in X^\ast} \Set{z\in Z\colon  \langle x^\ast,x\rangle\eqslantless \alpha(x^\ast,z) } \right)^\bullet=\bigcap_{x^\ast \in X^\ast}\Set{z\in Z\colon  \langle x^\ast,x\rangle\eqslantless \alpha(x^\ast,z) }^\bullet\\
                &=\sup_{x^\ast \in X^\ast}R(x^\ast,\langle x^\ast,x\rangle)
                \end{split}
               \label{eq:eqBA}
            \end{equation}
            for $R \in \mathcal{R}^{\max}$.

        \item As for the uniqueness, suppose that
            \begin{equation*}
                F(x)=\sup_{x^\ast \in X^\ast}R_i(x^\ast,\langle x^\ast,x\rangle), \quad x \in X.
            \end{equation*} 
            for two maximal risk functions $R_i$, $i=1,2$.
            Denote by $\alpha_i$ their corresponding minimal penalty functions by means of Proposition \ref{prop:onetooneRalpha}.
            The same argumentation as above yields
            \begin{equation*}
                F(x)=\Set{z \in Z\colon  \langle x^\ast, x\rangle \eqslantless \alpha_i^-(x^\ast, z)\text{ for all }x^\ast \in X^\ast}^\bullet, \quad x \in X.
            \end{equation*}
            For a given $z \in Z$, it holds
            \begin{equation}\label{eq:blabla02}
                \begin{split}
                 H(z):=&\Set{x \in X\colon  F(x)\leqslant \tilde{z}\text{ for some }\tilde{z}<z}\\
                =&\Set{x \in X\colon  \Set{\hat{z}\in Z\colon \langle x^\ast,x\rangle \eqslantless \alpha_i^-\left(x^\ast,\hat{z}\right)\text{ for all }x^\ast \in X}^\bullet\leqslant \tilde{z}\text{ for some }\tilde{z}<z}\\
                =&\Set{x \in X\colon  \langle x^\ast,x\rangle\eqslantless \alpha_i^-(x^\ast,\tilde{z}+k)\text{ for all }x^\ast \in X^\ast\text{ all }k>0\text{ and some }\tilde{z}< z}\\
                =&\Set{x \in X\colon  \langle x^\ast,x\rangle\eqslantless \alpha_i^-(x^\ast,\tilde{z})\text{ for all }x^\ast \in X^\ast\text{ and some }\tilde{z}< z}\\
                =&\bigcup_{\tilde{z} < z}\Set{x \in X\colon   \langle x^\ast,x\rangle\eqslantless \alpha_i^-(x^\ast,\tilde{z})\text{ for all }x^\ast \in X^\ast}=:\bigcup_{\tilde{z}<z} A^{i,\tilde z},
                \end{split}
           \end{equation}
            where $A^{i,\tilde{z}}:=\set{x \in X\colon  \langle x^\ast,x\rangle\eqslantless \alpha_i^-(x^\ast,\tilde{z})\text{ for all }x^\ast \in X^\ast}$ is a closed convex set.
            Since $\alpha_i$ is a minimal penalty function, by means of Proposition \ref{prop:uniquealpha} we have
            \begin{equation}\label{eq:blabla01}
                \alpha_i^-(x^\ast,\tilde{z})=\sup_{x \in A^{i,z}} \langle x^\ast, x\rangle-\mathbb{R}_+, \quad x^\ast \in X^\ast.
            \end{equation}
            Relations \eqref{eq:blabla02} and \eqref{eq:blabla01} and the left-continuity of $\alpha_i^-(x^\ast,\cdot)$ yields
            \begin{equation*}
                \alpha_i^-(x^\ast,z)=\sup_{\tilde{z}< z }\alpha^-_i\left( x^\ast, \tilde{z} \right)=\sup_{\tilde{z}<z}\sup_{x \in A^{i,z}}\langle x^\ast,x\rangle -\mathbb{R}_+=\sup_{x \in H(z)}\langle x^\ast,x\rangle -\mathbb{R}_+
            \end{equation*}
            showing that $\alpha_1^-=\alpha_2^-$ and therefore $R_1=R_2$.

        \item Let us finally show the maximality assertion by considering two risk functions $R_i \in \mathcal{R}$ for which \eqref{thm:main1} holds and where $R_1 \in \mathcal{R}^{\max}$.
            Let $\alpha_i=R_i^{-1}$ be the inverse of $R_i$ for $i=1,2$, then $\alpha_1 \in \mathcal{P}^{\min}$ and $\alpha_2$ is right-continuous.
            Since 
            \begin{equation*}
                F(x)=\sup_{x^\ast \in X^\ast}R_i(x^\ast, \langle x^\ast,x\rangle), \quad x \in X\text{ and }i=1,2
            \end{equation*}
            it follows from this and \eqref{eq:C1}, that for every $z$
            \begin{align*}
                x \in F^{-1}(z)\quad \text{if, and only if,}\quad& F(x)\leqslant z\\
                \text{if, and only if,}\quad & R_i(x^\ast, \langle x^\ast, x\rangle)\leqslant z\quad \text{for all }x^\ast \in X^\ast\\
                \text{if, and only if,}\quad & \langle x^\ast, x\rangle\eqslantless \alpha_i(x^\ast , z)\quad \text{for all }x^\ast \in X^\ast.
            \end{align*}
            However, $\alpha_{F^{-1}(z)}$ is the smallest function for which the latter equivalence holds according to Proposition \ref{prop:uniquealpha}, therefore $\alpha_{F^{-1}}\eqslantless \alpha_i$ for $i=1,2$.
            Since both $\alpha_i$ are right-continuous and $\alpha_1=\alpha_{F^{-1}}^+$, it follows that $\alpha_1\eqslantless \alpha_2$.
            Hence, by means of Lemma \ref{lem:minmax} and Theorem \ref{thm:onetoone}, it follows that $R_2\leqslant R_1$ ending the proof.
    \end{enumerate}
\end{proof}

\section{Complete Duality in the Convex Case}\label{sec:04}

To stress the link with known results in set-valued convex analysis, in particular the Fenchel-Moreau type representation in \citep{hamel09}, we address the previous complete duality when the image space consists of closed monotone convex sets.
Throughout this section, $(Z,\leq)$ is a preordered vector space which is also a locally convex topological vector space.
We also assume that $K=\set{k \in Z\colon  k\geq 0}$ is such that $K\setminus (-K)\neq \emptyset$ and $K$ has a non-empty interior, that is, $\inte(K)\neq \emptyset$.
The topological dual $Z^\ast$ of $Z$ is equipped with the $\sigma(Z^\ast,Z)$-topology and we denote by 
\begin{equation*}
  K^\circ:=\set{k^\ast \in Z^\ast\colon  \langle k^\ast, k\rangle \geq 0\text{ for all }k \in K}  
\end{equation*}
the polar cone of $K$.
Further, $\inte(K)$ is a non-empty convex cone such that $\inte(K)\subseteq K\setminus(-K)$.
Indeed, if $k \in \inte(K)$ is such that $k,-k \in K$, then for a convex neighborhood $O$ of $0$ with $k+O\subseteq K$ and $-k+O\subseteq K$, it follows that $2O \subseteq K$.
Thus, $O$ being absorbing, $K=Z$ contradicting $K\setminus (-K)\neq \emptyset$.

We define the set of convex and monotone sets
\begin{align*}
    \mathcal{C}(Z,K)&:=\Set{A \in \mathcal{P}(Z,K)\colon  \co ( A )=A},\\
    \mathcal{C}(Z,-K)&:=\Set{A \in \mathcal{P}(Z,-K)\colon  \co( A)=A}
\end{align*}
as well as the sets of their closures
\begin{align*}
    \mathcal{G}(Z,K)&:=\Set{A \in \mathcal{C}(Z,K) \colon \cl (A)=A},\\
    \mathcal{G}(Z,-K)&:=\Set{A\in \mathcal{C}(Z,-K) \colon \cl (A)=A}
\end{align*}
where $\co(A)$ and $\cl(A)$ are the convex hull and closed hull of $A$ respectively.
Clearly, $\mathcal{G}(Z,K)\subseteq \mathcal{C}(Z,K)\subseteq \mathcal{P}(Z,K)$ and $\mathcal{G}(Z,-K)\subseteq \mathcal{C}(Z,-K)\subseteq \mathcal{P}(Z,-K)$.
\begin{proposition}\label{prop:convlat}
    The spaces $(\mathcal{C}(Z,K),\leqslant)$, $(\mathcal{C}(Z,-K),\eqslantless)$, $(\mathcal{G}(Z,K),\leqslant)$, $(\mathcal{G}(Z,-K),\eqslantless)$ are complete lattices with the lattice operations
    \begin{align*}
        \inf A_i&:=\co \left( \bigcup A_i \right) &\text{and}&& \sup A_i&=\bigcap A_i, &&(A_i)\subseteq \mathcal{C}(Z,K)\\
        \inf A_i&:= \bigcap A_i &\text{and}&& \sup A_i&=\co \left(\bigcup A_i\right), &&(A_i)\subseteq \mathcal{C}(Z,-K)\\
        \inf A_i&:=\cl\,\co\left( \bigcup A_i \right) &\text{and}&& \sup A_i&=\bigcap A_i, &&(A_i)\subseteq \mathcal{G}(Z,K)\\
        \inf A_i&:=\bigcap A_i  &\text{and}&& \sup A_i&=\cl\, \co\left(\bigcup A_i\right), &&(A_i)\subseteq \mathcal{G}(Z,-K).
    \end{align*}
\end{proposition}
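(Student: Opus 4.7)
The approach is to verify, for each of the four posets, (a) that the stated operations produce elements of the corresponding class and (b) that they satisfy the universal property of $\inf$ and $\sup$. The structure of the argument parallels that of the preceding proposition for $\mathcal{K}(Z,K)$, and ultimately reduces to the already-established identities $\sup A_i=\bigcap A_i$ and $\inf A_i=\bigcup A_i$ in $(\mathcal{P}(Z,K),\leqslant)$, together with the symmetric ones in $(\mathcal{P}(Z,-K),\eqslantless)$.

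The first step is to establish two auxiliary stability facts that propagate the defining conditions through the class: if $B\in\mathcal{P}(Z,K)$, then both $\co(B)$ and $\cl(B)$ again belong to $\mathcal{P}(Z,K)$. For the convex hull, any element of $\co(B)+K$ is of the form $\sum_j\lambda_j b_j+k=\sum_j\lambda_j(b_j+k)$ with $b_j\in B$ and $\sum_j\lambda_j=1$; since $b_j+k\in B$ we conclude $\co(B)+K\subseteq\co(B)$. For the closure, a converging net $b_\alpha\to b$ with $b_\alpha\in B$ yields $b_\alpha+k\in B$ converging to $b+k$ by continuity of translations in a topological vector space, so $b+k\in\cl(B)$. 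Symmetric statements with $-K$ in place of $K$ hold verbatim. As a consequence, $\co\bigl(\bigcup A_i\bigr)\in\mathcal{C}(Z,K)$ and $\cl\co\bigl(\bigcup A_i\bigr)\in\mathcal{G}(Z,K)$ whenever the $A_i$ do; intersections trivially preserve convexity, monotonicity, and closedness, so the ``$\sup$'' candidates also lie in the right classes.

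Next I would check the universal properties on $(\mathcal{C}(Z,K),\leqslant)$. Since $\leqslant$ restricted to $\mathcal{P}(Z,K)$ coincides with $\supseteq$, an upper bound $U\in\mathcal{C}(Z,K)$ of $(A_i)$ is a set satisfying $U\subseteq A_i$ for every $i$, hence $U\subseteq\bigcap A_i$, so $\bigcap A_i$ is the least upper bound. Dually, a lower bound $L\in\mathcal{C}(Z,K)$ satisfies $L\supseteq A_i$ for all $i$, so $L\supseteq\bigcup A_i$; convexity of $L$ then forces $L\supseteq\co\bigl(\bigcup A_i\bigr)$, so $\co\bigl(\bigcup A_i\bigr)$ is the greatest lower bound. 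For $(\mathcal{G}(Z,K),\leqslant)$ the supremum is the same intersection (closedness being stable under $\bigcap$), while the infimum argument gains the extra step that $L$ is closed, hence $L\supseteq\cl\co\bigl(\bigcup A_i\bigr)$, exhibiting $\cl\co\bigl(\bigcup A_i\bigr)$ as the infimum. The two cases $(\mathcal{C}(Z,-K),\eqslantless)$ and $(\mathcal{G}(Z,-K),\eqslantless)$ follow by the same reasoning with the roles of $\sup/\inf$ and $\subseteq/\supseteq$ exchanged, since $\eqslantless$ restricted to $\mathcal{P}(Z,-K)$ coincides with $\subseteq$.

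The only mildly delicate point is the stability of $\cl$ under Minkowski addition with $K$, which depends on the continuity of translations in the topological vector space $Z$; the assumptions $\inte(K)\neq\emptyset$ and $K\setminus(-K)\neq\emptyset$ are not needed for this particular proposition and come into play only in the subsequent developments. Everything else is elementary bookkeeping about convex hulls, closures, and set inclusions, so I expect no substantive obstacle.
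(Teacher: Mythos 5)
Your argument is correct and is exactly the straightforward verification the paper has in mind (the paper itself omits the proof, noting only that it is straightforward and deferring to \citet{hamel09}): you check that the candidate operations land in the right class and then verify the universal properties via the identification of $\leqslant$ with $\supseteq$ and $\eqslantless$ with $\subseteq$. The stability facts $\co(B)+K\subseteq\co(B)$ and $\cl(B)+K\subseteq\cl(B)$ are the only non-trivial ingredients, and you handle both correctly, rightly observing that neither $\inte(K)\neq\emptyset$ nor $K\setminus(-K)\neq\emptyset$ is needed here.
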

In each case, the proof of the complete lattice property is straightforward, see \citep{hamel09}.

\begin{remark}
    In Subsection \ref{subsec:bullet}, the monotone closure ${}^\bullet$ of sets is defined with respect to $\hat{K}=K\setminus(-K)$.
    The same operation can, however, be defined with respect to any non-empty convex cone included in $\hat{K}$, in particular for $\inte(K)$.
    From now on we keep the notation
    \begin{equation*}
        A^\bullet=\Set{z \in Z\colon  z+\inte(K)\subseteq A}.
    \end{equation*}
    We further write $z <\tilde{z}$ whenever $\tilde{z}-z \in \inte(K)$.\footnote{Note that this strict order is still compatible with the vector structure since $\inte(K)$ is a convex cone, however it is different from the asymmetric part of $\leq$.}
\end{remark}
The following assertion shows that the ${}^\bullet$ operator for monotone sets coincides with the closure.
\begin{proposition}\label{prop:closed}
    If $A \in \mathcal{P}(Z,K)$, then $\cl(A)=A^\bullet$.
\end{proposition}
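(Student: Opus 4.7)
The plan is to prove both inclusions $A^\bullet \subseteq \cl(A)$ and $\cl(A)\subseteq A^\bullet$ using the monotonicity $A+K=A$ together with the non-emptiness of $\inte(K)$, which is the whole point of the hypothesis on $K$.

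For $A^\bullet \subseteq \cl(A)$, I would take $z\in A^\bullet$ and pick any $k\in\inte(K)$, which exists by assumption. Since $\inte(K)$ is a cone, $tk\in \inte(K)$ for every $t>0$, so by definition of $A^\bullet$ we have $z+tk\in A$ for all $t>0$. Letting $t\to 0^+$ gives a net in $A$ converging to $z$, hence $z\in\cl(A)$.

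For $\cl(A)\subseteq A^\bullet$, I would fix $z\in\cl(A)$ and $k\in\inte(K)$ and show $z+k\in A$. Choose a balanced neighborhood $O$ of $0$ with $k+O\subseteq K$, which exists precisely because $k$ is interior to $K$. Since $z\in\cl(A)$, the neighborhood $z+O$ meets $A$, so there is $a\in A$ with $z-a\in O$. Then $k+(z-a)\in k+O\subseteq K$, which rearranges to $z+k = a + (k+(z-a))\in A+K = A$, using monotonicity of $A$. Since $k\in\inte(K)$ was arbitrary, $z+\inte(K)\subseteq A$, i.e., $z\in A^\bullet$.

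I do not expect any serious obstacle: the argument is essentially the classical fact that for a set stable under translation by a cone with non-empty interior, topological closure and algebraic ``interior closure'' coincide. The only point to be careful about is the direction $\cl(A)\subseteq A^\bullet$, where one must use a net (or local base of neighborhoods, if $Z$ is not first countable) rather than a sequence in the definition of the closure, but this is a cosmetic matter and the argument above is phrased so that it applies in the locally convex setting without change.
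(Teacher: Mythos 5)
Your proof is correct. For the inclusion $\cl(A)\subseteq A^\bullet$ you argue essentially as the paper does: pick a neighborhood $O$ of $0$ with $k+O\subseteq K$, approximate $z$ by a point $a\in A$ within $O$, and absorb the error into the cone via $A+K=A$; your version is in fact slightly leaner, since you conclude $z+k\in A$ directly rather than exhibiting a whole neighborhood $z+k+V$ inside $A$, and your insistence on a balanced $O$ tidies up the sign issue in $z-a$ versus $a-z$. For the reverse inclusion $A^\bullet\subseteq\cl(A)$ you take a genuinely different and more elementary route: you approximate $z\in A^\bullet$ by the points $z+tk\in A$, $t\downarrow 0$, using that $\inte(K)$ is a cone and that scalar multiplication is continuous. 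The paper instead derives this inclusion abstractly from the monotonicity of the ${}^\bullet$ operator (Proposition \ref{lem:passage}) combined with the fact that a closed monotone set is ${}^\bullet$-stable; that latter fact ultimately rests on the same limiting argument you make explicit, so your version has the merit of being self-contained and transparent, while the paper's buys a shorter proof by reusing its lattice machinery. No gaps.
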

\begin{proof}
    First, Proposition \ref{lem:passage} implies that $[\cl(A)]^\bullet \leqslant \cl(A)\leqslant A$ and Proposition \ref{prop:convlat} yields $\cl(A)=[\cl(A)]^\bullet$.
    Hence, by Proposition \ref{lem:passage} it holds that $\cl(A)\leqslant A^\bullet$.
    Second, we show that for $z \in \cl(A)$, $z+k \in A$ for every $k>0$, that is, $k\in \inte(K)$.
    Let $O$ be neighborhood of $0$ such that $k+O\subseteq K$.
    Let further $V$ be another neighborhood of $0$ such that $V+V\subseteq O$.
    Fix $\tilde{z}\in A$ such that $z-\tilde{z} \in V$.
    It follows that
    \begin{equation*}
        z+k+V=\tilde{z}+k+z-\tilde{z}+V\subseteq \tilde{z}+k+V+V\subseteq \tilde{z}+k+O\subseteq \tilde{z}+k+K\subseteq A
    \end{equation*}
    which implies that $z+k \in A$ for all $k>0$.
    This shows that $A^\bullet \leqslant \cl(A)$.
\end{proof}
Let $(X,\leq)$ be another preordered locally convex topological vector space with $C:=\set{c \in X\colon  c\geq 0}$ and $\inte(C)\neq \emptyset$.
For increasing functions $F\colon  X \to \mathcal{G}(Z,K)$ and $G\colon Z \to \mathcal{G}(X,-C)$ the left- and right-continuous version of $F$ and $G$ are given by
\begin{align*}
    F^-(x)&=\sup_{\tilde{x}<x}F(\tilde{x})=\bigcap_{c \in \inte(C)} F(x+c)&&\text{and}&&&G^-(z)&=\sup_{\tilde{z}<z}G(\tilde{z})=\bigcap_{k \in \inte(K)} G(z+k).
\end{align*}

\begin{proposition}\label{prop:onetoone}
    The following statements hold
    \begin{enumerate}[label=\textit{(\roman*)}]
        \item $F\colon X\to \mathcal{C}(Z,K)$ is increasing and quasiconvex if, and only if, $F^{-1}\colon Z \to \mathcal{C}(X,-C)$ is increasing and quasiconcave;
        \item $F\colon X\to \mathcal{G}(Z,K)$ is increasing, lower level-closed, left-continuous and quasiconvex if, and only if, $F^{-1}\colon Z \to \mathcal{G}(X,-C)$ is increasing, upper level-closed, right-continuous and quasiconcave.
            In this case, the Relations \eqref{eq:C1} and \eqref{eq:C2} hold.
    \end{enumerate}
\end{proposition}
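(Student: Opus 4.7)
The plan is to deduce both statements from the inversion symmetry of Proposition \ref{prop:inverse} (and its dual form in Remark \ref{rem:quasiconcave}), the level-set characterisations of Proposition \ref{prop:quasiconvex}, and, for part (ii), the complete duality of Theorem \ref{thm:onetoone}. The conceptual link between the algebraic framework of Section \ref{sec:01} and the topological setting of the present section is supplied by Proposition \ref{prop:closed}: for $A\in\mathcal{P}(Z,K)$ one has $A^\bullet=\cl(A)$, so that $\mathcal{G}(Z,K)=\mathcal{C}(Z,K)\cap\mathcal{K}(Z,K)$, and analogously for $-C$. This identification is what will allow me to apply Theorem \ref{thm:onetoone}, stated abstractly in terms of $\mathcal{K}$, directly inside $\mathcal{G}$.

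For part (i), I would start from $F\colon X\to\mathcal{C}(Z,K)$ increasing and quasiconvex. Proposition \ref{prop:inverse} then gives that $F^{-1}$ is increasing and takes values in $\mathcal{P}(X,-C)$; Remark \ref{rem:fdslkjhi} combined with Proposition \ref{prop:quasiconvex}\ref{prop:quasiconvex01} shows that each $F^{-1}(z)=\mathcal{L}_F(z)$ is convex, so the image actually lies in $\mathcal{C}(X,-C)$. Since $(F^{-1})^{-1}=F$ has convex images by hypothesis, the identity $\mathcal{U}_{F^{-1}}(x)=F(x)$, read through Remark \ref{rem:quasiconcave}\ref{prop:quasiconcave01}, yields quasiconcavity of $F^{-1}$. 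The reverse implication is obtained symmetrically, interchanging the roles of $F$ and $F^{-1}$ and invoking Remark \ref{rem:quasiconcave} in place of Proposition \ref{prop:inverse}.

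For part (ii), I would build on (i) by matching three further properties. Lower level-closedness of $F$ is by definition the closedness of $F^{-1}(z)=\mathcal{L}_F(z)$ for every $z\in Z$, which together with convexity amounts exactly to $F^{-1}$ taking values in $\mathcal{G}(X,-C)$. Dually, the identity $\mathcal{U}_{F^{-1}}(x)=(F^{-1})^{-1}(x)=F(x)$ converts closedness of the images of $F$ (that is, $F$ mapping into $\mathcal{G}(Z,K)$) into upper level-closedness of $F^{-1}$. Finally, for the equivalence between left-continuity of $F$ and right-continuity of $F^{-1}$, the inclusion $\mathcal{G}\subseteq\mathcal{K}$ provided by Proposition \ref{prop:closed} lets me invoke Theorem \ref{thm:onetoone} directly, which simultaneously delivers Relations \eqref{eq:C1} and \eqref{eq:C2}.

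The only non-routine step is the coherent handling of the $\bullet$-operator, which in Section \ref{sec:04} is built from $\inte(K)$ rather than from $\hat{K}$ as in Subsection \ref{subsec:bullet}. Proposition \ref{prop:closed} is the precise tool that identifies this redefined operator with topological closure on monotone sets, and without this identification Theorem \ref{thm:onetoone} could not be transported into the topological framework of $\mathcal{G}$. Once this bridge is established, the remainder of the argument is a bookkeeping of equivalences already available in the paper.
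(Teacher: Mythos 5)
Your proposal is correct and follows essentially the same route as the paper: part \textit{(i)} via Proposition \ref{prop:inverse} and the level-set characterisations (the paper verifies quasiconcavity of $F^{-1}$ by a direct computation with $F(x)=\lambda F(x)+(1-\lambda)F(x)$, which is the same fact you extract from $\mathcal{U}_{F^{-1}}(x)=F(x)$ and Remark \ref{rem:quasiconcave}), and part \textit{(ii)} by translating closedness through $F^{-1}(z)=\mathcal{L}_F(z)$ and $\mathcal{U}_{F^{-1}}(x)=F(x)$ and then invoking Theorem \ref{thm:onetoone} via the inclusions $\mathcal{G}\subseteq\mathcal{K}$. Your explicit remark that Proposition \ref{prop:closed} is what legitimises these inclusions for the $\inte(K)$-based $^\bullet$-operator is exactly the (tacit) justification the paper relies on.
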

\begin{proof}
    \begin{enumerate}[label=\textit{(\roman*)},fullwidth]
        \item Let $F\colon X\to \mathcal{C}(Z,K)$ be an increasing and quasiconvex function.
            Proposition \ref{prop:inverse} implies that $F^{-1}\colon Z \to \mathcal{P}(X,-C)$ is increasing and Proposition \ref{prop:quasiconvex} implies that $F^{-1}$ takes values in $\mathcal{C}(X,-C)$.
            Let now $\lambda \in (0,1)$ and $z^1,z^2 \in Z$.
            If $F(x)\leqslant z^1$ and $F(x)\leqslant z^2$, since $F(x)$ is convex, it follows that $F(x)=\lambda F(x)+(1-\lambda)F(x)\leqslant \lambda z^1+(1-\lambda)z^2$.
            Hence $F^{-1}(z^1)\eqslantless x$ and $F^{-1}(z^2)\eqslantless x$ implies $F^{-1}(\lambda z^1+(1-\lambda)z^2)\eqslantless x$, that is, $\inf\set{F^{-1}(z^1),F^{-1}(z^2)}\eqslantless F^{-1}(\lambda z^1+(1-\lambda)z^2)$.
            Thus $F$ is quasiconcave.
            The converse statement is analogous.
        \item Let $F\colon X\to \mathcal{G}(Z,K)$ be an increasing, lower level-closed, left-continuous and quasiconvex function.
            It follows from the previous step that $F^{-1}\colon Z \to \mathcal{C}(X,-C)$ is an increasing, quasiconvex function.
            However, $F$ being lower level-closed, it follows that $F^{-1}$ takes values in $\mathcal{G}(X,-C)$.
            Further, 
            \begin{equation*}
                \mathcal{U}_{F^{-1}}(x)=\set{z \in Z\colon  x\eqslantless F^{-1}(z)}=\set{z \in Z\colon  F(x)\leqslant z}=F(x)   
            \end{equation*}
            which is closed.
            Hence $F^{-1}$ is upper level-closed.
            Finally, since $\mathcal{G}(Z,K)\subseteq \mathcal{K}(Z,K)$ as well as $\mathcal{G}(X,-C)\subseteq \mathcal{K}(X,-C)$, the statement follows from Theorem \ref{thm:onetoone}.
    \end{enumerate}
\end{proof}
We can formulate the complete duality result for lower level-closed and quasiconvex set-valued functions taking values in $\mathcal{G}(Z,K)$.
A function $R\colon X^\ast \times \mathbb{R}\to \mathcal{G}(Z,K)$ is called a \emph{maximal risk function} if it fulfills \ref{cond:max01}, \ref{cond:max05}, \ref{cond:max02}, and \ref{cond:max03} of the previous section.
\begin{theorem}\label{thm:robrepint}
    Any lower level-closed and quasiconvex function $F\colon X \to \mathcal{G}(Z,K)$ admits the dual representation
    \begin{equation}
        F\left(x\right)= \sup_{x^\ast \in  X^\ast } R\left(x^\ast,\langle x^\ast,x\rangle\right),
        \label{thm:main2}
    \end{equation}
    for a unique maximal risk function $R\colon X^\ast \times \mathbb{R}\to \mathcal{G}(Z,K)$.

    Furthermore, if \eqref{thm:main2} holds for another $\tilde{R}\colon X^\ast \times \mathbb{R}\to \mathcal{G}(Z,K)$ satisfying Condition \ref{cond:max01}, then $\tilde{R}\leqslant R$.
\end{theorem}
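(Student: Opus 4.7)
The plan is to reduce Theorem~\ref{thm:robrepint} to Theorem~\ref{thm:robrep}. Since $\mathcal{G}(Z,K)\subseteq\mathcal{K}(Z,K)$ and the ${}^\bullet$-operator coincides with the topological closure on monotone sets (Proposition~\ref{prop:closed}), any lower level-closed quasiconvex $F\colon X\to\mathcal{G}(Z,K)$ falls within the scope of Theorem~\ref{thm:robrep}. That theorem provides a unique maximal risk function $R\in\mathcal{R}^{\max}$ valued in $\mathcal{K}(Z,K)$ realising the representation, together with the maximality statement against all competitors valued in $\mathcal{K}(Z,K)$. The only point specific to the present setting is therefore to show that each $R(x^\ast,s)$ is in fact \emph{convex}; closedness and monotonicity are already built into $\mathcal{K}(Z,K)$.

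To handle convexity, the idea is to introduce a manifestly convex candidate and then identify it with the $R$ of Theorem~\ref{thm:robrep} via uniqueness. Working in the complete lattice $\mathcal{G}(Z,K)$ from Proposition~\ref{prop:convlat}, set
\begin{equation*}
\hat R(x^\ast,s):=\bigcap_{\tilde s<s}\cl\co\Bigl(\bigcup_{\langle x^\ast,y\rangle\geq \tilde s}F(y)\Bigr),
\end{equation*}
which is an intersection of elements of $\mathcal{G}(Z,K)$ and hence lies in $\mathcal{G}(Z,K)$. Monotonicity and left-continuity in $s$, positive homogeneity \eqref{eq:poshomo} in $x^\ast$, joint quasi-concavity, the openness condition~\ref{cond:max02}, and~\ref{cond:max03} follow by direct inspection, using the linearity of $\langle x^\ast,\cdot\rangle$ together with the lattice structure of $\mathcal{G}(Z,K)$. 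For the representation $F(x)=\sup_{x^\ast}\hat R(x^\ast,\langle x^\ast,x\rangle)$, the inclusion $F(x)\subseteq \hat R(x^\ast,\langle x^\ast,x\rangle)$ is automatic. Conversely, if $z\notin F(x)$ then by Proposition~\ref{prop:onetoone} $x\notin F^{-1}(z)\in\mathcal{G}(X,-C)$, so Hahn--Banach yields $x^\ast\in X^\ast$ and $\beta\in\mathbb{R}$ with $\sigma_{F^{-1}(z)}(x^\ast)<\beta<\langle x^\ast,x\rangle$; every $y$ with $\langle x^\ast,y\rangle\geq\beta$ then satisfies $z\notin F(y)$, and the closed upward-closed structure of each $F(y)$ keeps $z$ outside the closed convex hull, hence outside $\hat R(x^\ast,\langle x^\ast,x\rangle)$.

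Once $\hat R$ is shown to be an $\mathcal{R}^{\max}$-element realising the same representation as $R$, the uniqueness clause of Theorem~\ref{thm:robrep} forces $\hat R=R$, so $R$ takes values in $\mathcal{G}(Z,K)$. Uniqueness within $\mathcal{G}(Z,K)$-valued maximal risk functions and the maximality inequality $\tilde R\leqslant R$ for any competitor $\tilde R$ in this narrower class satisfying~\ref{cond:max01} and \eqref{thm:main2} are then immediate from Theorem~\ref{thm:robrep}, since any such $\tilde R$ is in particular a risk function into $\mathcal{K}(Z,K)$. I expect the main obstacle to be the Hahn--Banach step: the separating functional lives in $X^\ast$ whereas the object to be separated lies in $Z$, so one has to argue that the separation in $X$ prevents $z$ from slipping into $\cl\co\bigcup_{\langle x^\ast,y\rangle\geq\beta}F(y)$ inside $Z$, and that the left-continuous intersection over $\tilde s<s$ does not reintroduce $z$. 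The interior-point assumption $\inte(K)\neq\emptyset$ together with the upward-closedness of each $F(y)$ should provide the uniform slack needed to make this argument go through.
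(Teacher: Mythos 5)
Your reduction strategy is reasonable in outline, but the step you yourself flag as ``the main obstacle'' is a genuine gap, and it sits at the heart of the argument. From the separation $\sigma_{F^{-1}(z)}(x^\ast)<\beta<\langle x^\ast,x\rangle$ you correctly conclude that $z\notin F(y)$ for every $y$ with $\langle x^\ast,y\rangle\geq\beta$, but you then infer $z\notin\cl\co\bigl(\bigcup_{\langle x^\ast,y\rangle\geq\beta}F(y)\bigr)$. A point lying outside every member of a family of closed convex sets can perfectly well lie in the closed convex hull of their union (in $Z=\R^2$ with $K=\R^2_+$, the sets $(1,-1)+K$ and $(-1,1)+K$ both miss the origin, yet their convex hull contains it), and for a merely quasiconvex $F$ the union $\bigcup_{\langle x^\ast,y\rangle\geq\beta}F(y)$ has no reason to be convex: quasiconvexity only yields $F(y^1)\cap F(y^2)\subseteq F(\lambda y^1+(1-\lambda)y^2)$, which says nothing about $\lambda z^1+(1-\lambda)z^2$ for $z^i\in F(y^i)$; convexity of that union is essentially what \emph{convexity} of $F$ would give. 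Neither $\inte(K)\neq\emptyset$ nor the upward-closedness of the images supplies the missing ``uniform slack''. Consequently your candidate $\hat R$ may be strictly larger than the true maximal risk function, the representation $F(x)=\sup_{x^\ast}\hat R(x^\ast,\langle x^\ast,x\rangle)$ is unproved, and the appeal to the uniqueness clause of Theorem \ref{thm:robrep} cannot be made. (The subsidiary claim that $\hat R$ satisfies \ref{cond:max05}--\ref{cond:max03} ``by direct inspection'' is also not innocent; in the paper these verifications are exactly the content of Propositions \ref{prop:onetooneRalpha}, \ref{prop:uniquealpha} and \ref{prop:minpen}.)

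The paper avoids this trap by never convexifying on the image side. Convexity of the values $R(x^\ast,s)$ is obtained through the penalty-function duality: $R(x^\ast,s)$ is the upper level set $\mathcal{U}_{\alpha(x^\ast,\cdot)}(s)$ of the right-continuous version $\alpha(x^\ast,\cdot)$ of $z\mapsto\sigma_{F^{-1}(z)}(x^\ast)-\R_+$, and Step 3 of the paper's proof shows directly that this upper level set is convex (using quasiconcavity of $F^{-1}$, which holds because $F$ takes convex values) and closed (using Proposition \ref{prop:closed} to identify the $^\bullet$-closure with the topological closure). Two further points you gloss over: Theorem \ref{thm:robrep} and the class $\mathcal{R}^{\max}$ there are built on the $^\bullet$-operator relative to $\hat{K}=K\setminus(-K)$, whereas Section \ref{sec:04} reruns the Section \ref{sec:03} machinery with $\inte(K)$ in its place, and one must check (the paper's Step 2) that $\inte(K)\neq\emptyset$ makes $Z$ properly preordered so that Condition \ref{cond:max05} rather than \ref{cond:max05bis} is the operative one. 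If you wish to salvage your route, the honest fix is to prove that $\Set{z\in Z\colon \sigma_{F^{-1}(\tilde z)}(x^\ast)\geq s\text{ for all }\tilde z>z}$ is already convex --- which is precisely the paper's Step 3, at which point the detour through $\cl\co$ of a union becomes unnecessary.
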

\begin{proof}
    \begin{enumerate}[label=\textit{Step \arabic*:}, fullwidth]
        \item Let us show that $R\colon X^\ast \times \mathbb{R}\to \mathcal{G}(Z,K)$ is a maximal risk function if, and only if, $\alpha=R^{-1}\colon  X^\ast \times Z\to \mathcal{G}(\mathbb{R},-\mathbb{R}_+)$ is a minimal penalty function, that is, it satisfies \ref{cond:min05}, \ref{cond:min02}, \ref{cond:min03} and 
            \begin{enumerate}[label=(\alph*')]
                \item \label{cond:min01bis} $z\mapsto \alpha(x^\ast,z)$ is increasing, upper level-closed, right-continuous and quasiconcave.
            \end{enumerate}
            instead of \ref{cond:min01}.
            According to the proof of Proposition \ref{prop:onetooneRalpha}, we just have to show that \ref{cond:max01} for $R$ is equivalent to \ref{cond:min01bis} for $\alpha$.
            Proposition \ref{prop:onetoone} shows that $\alpha$ fulfills \ref{cond:min01bis} if, and only if, $R\colon X^\ast \times \mathbb{R}\to \mathcal{G}(Z,K)$ fulfills 
            \begin{enumerate}[label=(\roman*')]
                \item \label{cond:max01bis} $s\mapsto R(x^\ast,s)$ is increasing, lower level-closed, left-continuous and quasiconvex.
            \end{enumerate}
            However, since $\mathbb{R}$ is totally preordered, \ref{cond:max01bis} is equivalent \ref{cond:max01}.
            Indeed, \ref{cond:max01bis} implies \ref{cond:max01} is immediate.
            Conversely, for $s_1,s_2 \in \mathbb{R}$, and $\lambda \in (0,1)$, without loss of generality, $s_1\leq s_2$, it follows from $R$ being increasing, that $R(x^\ast, \lambda s_1+(1-\lambda)s_2)\leqslant R(x^\ast, s_2)=\sup\set{ R(x^\ast, s_1), R(x^\ast,s_2)}$ showing the quasiconcavity.
            Further, from the left-continuity it follows that $\set{s \in \mathbb{R}\colon  R(x^\ast, s)\leqslant z}=]-\infty, s_0] \in \mathcal{G}(\mathbb{R},-\mathbb{R}_+)$ for some $s_0$ showing the lower semi-continuity.

        \item Since $\inte(K)\neq \emptyset$, we are properly preordered for $<$.
            Indeed, for $k_1,k_2 \in \inte(K)$, it follows that $k_1-\inte(K)$ and $k_2-\inte(K)$ are two neighborhoods of $0$.
            Hence, $(k_1-\inte(K))\cap (k_2-\inte(K)) \cap \inte(K)\neq \emptyset$ showing the existence of $k_3 \in \inte(K)$ such that $k_3\leq k_1,k_2$.

        \item Let us show that the right-continuous version $\alpha$ of $\alpha_{F^{-1}}$ for $F\colon X \to \mathcal{G}(X,K)$ lower level-closed and quasiconvex fulfills Conditions \ref{cond:min01bis}, \ref{cond:min05}, \ref{cond:min02} and \ref{cond:min03}.
            As for \ref{cond:min05}, \ref{cond:min02} and \ref{cond:min03}, it follows by the same argumentation as Proposition \ref{prop:minpen}, since we are properly preordered.
            Let us show that $z\mapsto \alpha(x^\ast, z)$ fulfills \ref{cond:min01bis}.
            Being increasing and right-continuous follows from Theorem \ref{thm:onetoone}.
            According to Proposition \ref{prop:onetoone}, $F^{-1}\colon  Z \to \mathcal{G}(X,-\set{0})$ is quasiconcave, hence
            \begin{equation*}
                \mathcal{U}_{\alpha(x^\ast,\cdot)}(s)=\Set{z \in Z\colon  s\eqslantless \inf_{\tilde{z}>z}\alpha_{F^{-1}(\tilde{z})}(x^\ast)}=\Set{z \in Z\colon  s\leq \sup_{x \eqslantless F^{-1}(\tilde{z})}\langle x^\ast, x\rangle \text{ for all }\tilde{z}>z}\\
            \end{equation*}
            is convex since $x \eqslantless \inf \set{F^{-1}(z^1),F^{-1}(z^2)}\eqslantless F^{-1}(\lambda z^1+(1-\lambda)z^2)$.
            We are left to show that $\mathcal{U}_{\alpha(x^\ast,\cdot)}(s)$ is closed for all $s$.
            Let $z \in Z$ such that $z+k\in\mathcal{U}_{\alpha(x^\ast,\cdot)}(s)$ for all $k>0$.
            It follows that $s\leq \sup_{x \in F^{-1}(z+k+\hat{k})}\langle x^\ast, x\rangle$ for every $k>0$ and $\hat{k}>0$.
            Hence $\mathcal{U}_{\alpha(x^\ast,\cdot)}(s)=[\mathcal{U}_{\alpha(x^\ast,\cdot)}(s)]^\bullet$, and by Proposition \ref{prop:closed} it is therefore closed.

        \item Finally, existence, uniqueness and minimality follows by the same argumentation as in the proof of Theorem \ref{thm:robrep} since the closure and the ${}^\bullet$ operation coincide.
    \end{enumerate}
\end{proof}

We now address the unique characterization of the Fenchel-Moreau type representation in \citep{hamel09} in the case where the interior of $K=\set{k \in Z\colon  k\geq 0}$ is non-empty.
For the sake of completeness, we also sketch the proof of the existence.\footnote{As in \citep{hamel09}, the subsequent proof of the existence does not require $\inte(K)\neq \emptyset$.}
To simplify notations, we write $A+B$ for $\cl \co (A+B)$ where $A,B \in \mathcal{G}(Z,K)$.
For every $z^\ast \in K^\circ$, we define the function $S\colon K^\circ\times \R \to \mathcal{G}(Z,K)$ as follows
\begin{equation*}
    S(z^\ast,s)=\Set{z \in Z\colon  s\leq \langle z^\ast,z\rangle}.
\end{equation*}
Note that $\lambda S(z^\ast, s)=S(z^\ast,\lambda s)$ for every $\lambda >0$.
Likewise, it also holds $\lambda S(z^\ast, s_1)+(1-\lambda)S(z^\ast, s_2)=S(z^\ast, \lambda s_1+(1-\lambda)s_2)$ for every $\lambda \in (0,1)$, that is, $S(z^\ast,\cdot)$ is affine.
Finally, for $F\colon X \to \mathcal{G}(Z,K)$ we define $-G\colon X^\ast \times K^\circ \to \mathcal{G}(Z,K)$ as
\begin{equation}
    -G(x^\ast,z^\ast)=\inf_{x \in X} \Set{F(x)+S(z^\ast,-\langle x^\ast, x\rangle)}.
    \label{}
\end{equation}
This functional is the set-valued Fenchel-Moreau conjugate introduced in \citet{hamel09} and can be seen as an analogue to the negative Fenchel-Moreau convex conjugate in the scalar case.
However, unlike the Fenchel-Moreau conjugate, it is not an automorphism.
\begin{theorem}\label{thm:robrepconv}
    Let $F\colon X \to \mathcal{G}(Z,K)$ be a lower level-closed convex function which is proper, that is $F(x)\neq \emptyset$ for some $x\in X$ and $F(x)\neq Z$ for all $x \in X$.
    Then, it holds
    \begin{equation}\label{eq:FM}
        F(x)=\sup_{x^\ast \in X^\ast}\sup_{z^\ast \in K^\circ}\Set{ -G(x^\ast, z^\ast)+S\left(z^\ast,\langle x^\ast, x\rangle\right) }.
    \end{equation}
    Furthermore, denoting by $R$ the corresponding unique maximal risk function according to Theorem \ref{thm:robrepint}, it holds
    \begin{equation*}
        R(x^\ast,s)=\sup_{z^\ast \in K^\circ}\Set{ -G(x^\ast, z^\ast)+S\left(z^\ast,s\right)}
    \end{equation*}
    for every $x^\ast \in X^\ast$ such that $-G(x^\ast, z^\ast)\neq \emptyset$ for some $z^\ast \in K^\circ \setminus \set{0}$.
\end{theorem}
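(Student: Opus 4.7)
My plan is structured in three stages. First, I would prove the Fenchel--Moreau representation \eqref{eq:FM}. Since $F$ is proper, convex and lower level-closed with values in $\mathcal{G}(Z,K)$, its epigraph $\epi F \subseteq X \times Z$ is a non-empty, closed, convex set that is monotone with respect to $\{0\} \times K$. For any $(x_0, z_0) \notin \epi F$, scalar Hahn--Banach yields a strict separator $(x^\ast, z^\ast) \in X^\ast \times Z^\ast$, and the $K$-monotonicity of $\epi F$ forces $z^\ast \in K^\circ$. Recognising the infimum $\inf_{(x,z) \in \epi F}\{\langle x^\ast, x\rangle + \langle z^\ast, z\rangle - \langle x^\ast, x_0\rangle\}$ as the threshold defining the half-space $-G(x^\ast, z^\ast) + S(z^\ast, \langle x^\ast, x_0\rangle)$ yields \eqref{eq:FM}.

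Second, I would unwind $\tilde{R}(x^\ast, s) := \sup_{z^\ast \in K^\circ}\{-G(x^\ast, z^\ast) + S(z^\ast, s)\}$ using the elementary Minkowski-sum identity
\begin{equation*}
A + S(z^\ast, t) = S\bigl(z^\ast,\, t + \inf\nolimits_{z \in A}\langle z^\ast, z\rangle\bigr),\qquad A \in \mathcal{G}(Z, K),\ z^\ast \in K^\circ.
\end{equation*}
Introducing the scalarization $\varphi_{z^\ast}(x) := \inf_{z \in F(x)}\langle z^\ast, z\rangle$ and its scalar Fenchel conjugate $\varphi^\ast_{z^\ast}$ in the $x$-variable, this identity yields first $-G(x^\ast, z^\ast) = S(z^\ast, -\varphi^\ast_{z^\ast}(x^\ast))$ and then
\begin{equation*}
\tilde{R}(x^\ast, s) = \bigcap_{z^\ast \in K^\circ} S\bigl(z^\ast,\, s - \varphi^\ast_{z^\ast}(x^\ast)\bigr) = \Set{z \in Z\colon \langle z^\ast, z\rangle \geq s - \varphi^\ast_{z^\ast}(x^\ast) \text{ for all } z^\ast \in K^\circ}.
\end{equation*}
The qualification hypothesis translates exactly to $\varphi^\ast_{z^\ast}(x^\ast) > -\infty$ for some $z^\ast \in K^\circ \setminus \{0\}$.

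Third, I would identify $R = \tilde{R}$ via the penalty-function route of Section \ref{sec:03}. By Theorem \ref{thm:robrep} together with Proposition \ref{prop:onetooneRalpha}, $R$ is the inverse in the second argument of the right-continuous envelope of $\alpha_{F^{-1}(z)}(x^\ast) = \sigma_{F^{-1}(z)}(x^\ast) - \R_+$, while Hahn--Banach applied to $F(x) \in \mathcal{G}(Z,K)$ in the $z$-variable gives $F^{-1}(z) = \Set{x \colon \varphi_{z^\ast}(x) \leq \langle z^\ast, z\rangle \text{ for all } z^\ast \in K^\circ}$. The key identity then reduces to the scalar Lagrangian duality
\begin{equation*}
\sigma_{F^{-1}(z)}(x^\ast) = \inf_{z^\ast \in K^\circ}\bigl\{\varphi^\ast_{z^\ast}(x^\ast) + \langle z^\ast, z\rangle\bigr\},
\end{equation*}
after which the chain $z \in R(x^\ast, s) \Leftrightarrow s \leq \sigma_{F^{-1}(z)}(x^\ast) \Leftrightarrow s - \varphi^\ast_{z^\ast}(x^\ast) \leq \langle z^\ast, z\rangle\ \forall z^\ast \in K^\circ \Leftrightarrow z \in \tilde{R}(x^\ast, s)$ finishes the identification, with the ${}^\bullet$-closure coinciding with the topological closure by Proposition \ref{prop:closed}.

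The main obstacle is precisely this Lagrangian identity. Strong duality will rest on the convexity of $F$ together with the constraint qualification provided by the non-triviality hypothesis, while the more delicate point is reducing an a priori measure-valued Lagrange multiplier on $K^\circ$ to a single $z^\ast \in K^\circ$. This reduction should hinge on $z^\ast \mapsto \varphi_{z^\ast}(x)$ being concave and positively homogeneous: a Jensen-type inequality gives $\int \varphi_{z^\ast}(x)\,d\mu(z^\ast) \leq \varphi_{\bar{z}^\ast}(x)$ for $\bar{z}^\ast := \int z^\ast \, d\mu \in K^\circ$, so concentrating any multiplier $\mu$ at its barycentre can only enlarge the dual value. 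An alternative route, sidestepping Lagrangian duality, is to verify directly from the explicit formula that $\tilde{R}$ satisfies the maximal-risk conditions \ref{cond:max01}, \ref{cond:max05}, \ref{cond:max02}, \ref{cond:max03} and invoke the uniqueness part of Theorem \ref{thm:robrepint}; the delicate step there becomes \ref{cond:max03}, which is again exactly what the non-triviality hypothesis is designed to secure.
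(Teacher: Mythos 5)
Your Stages 1 and 2 are sound and essentially reproduce the paper's existence argument in different notation: the paper runs the epigraph separation as a scalar Fenchel--Moreau biconjugation of the indicator $\chi$ of $\epi F$, and your scalarization satisfies $\varphi^\ast_{z^\ast}(x^\ast)=\chi^\ast(x^\ast,-z^\ast)$, so your identity $-G(x^\ast,z^\ast)=S(z^\ast,-\varphi^\ast_{z^\ast}(x^\ast))$ and the resulting formula $\tilde R(x^\ast,s)=\bigcap_{z^\ast\in K^\circ}S(z^\ast,s-\varphi^\ast_{z^\ast}(x^\ast))$ are exactly what the paper establishes (with the case $z^\ast=0$ and the $\cl\co$ convention for Minkowski sums handled separately, as you should also do).

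The gap is in Stage 3, and it is the heart of the theorem. First, the pointwise Lagrangian identity $\sigma_{F^{-1}(z)}(x^\ast)=\inf_{z^\ast\in K^\circ}\{\varphi^\ast_{z^\ast}(x^\ast)+\langle z^\ast,z\rangle\}$ on which your chain of equivalences rests is precisely the no-duality-gap statement you would need to prove; weak duality gives only ``$\leq$'', and your proposed device (collapsing a measure-valued multiplier to its barycentre via concavity of $z^\ast\mapsto\varphi_{z^\ast}(x)$) only reduces measures to points --- it does not produce a (near-)optimal multiplier, which is the actual difficulty in an infinite-dimensional setting with a continuum of constraints. Second, your translation of the qualification hypothesis is off in a way that matters: for $z^\ast\neq0$ and $F$ proper, $\varphi^\ast_{z^\ast}(x^\ast)>-\infty$ holds automatically, so under your reading the hypothesis is vacuous; the condition that actually powers the argument is $\varphi^\ast_{\tilde z^\ast}(x^\ast)=\chi^\ast(x^\ast,-\tilde z^\ast)<\infty$ for some $\tilde z^\ast\in K^\circ\setminus\set{0}$, i.e.\ finiteness of the dual value along some nonzero direction. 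Third, your equivalence $z\in R(x^\ast,s)\Leftrightarrow s\leq\sigma_{F^{-1}(z)}(x^\ast)$ silently drops the right-continuous regularization: $R$ is the inverse of $\alpha=\alpha_{F^{-1}}^{+}$, not of $\alpha_{F^{-1}}$ itself, and the exact pointwise identity can fail even though the regularized one holds. The paper's resolution exploits exactly this slack: it proves only the relaxed inequality $\inf_{z^\ast\in K^\circ}\{\varphi^\ast_{z^\ast}(x^\ast)+\langle z^\ast,z_2\rangle\}\leq\sigma_{F^{-1}(z_1)}(x^\ast)$ for $z_1-z_2\in\inte(K)$, by scaling the qualified direction as $\lambda\tilde z^\ast$ and letting $\lambda\downarrow0$ while using $z_1-z_2\in\inte(K)$ to split the inner supremum, and then invokes right-continuity of both penalty functions to conclude $\tilde\alpha=\alpha$. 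Without this $\inte(K)$-slack-plus-right-continuity mechanism (or a genuine strong duality theorem replacing it), the identification $R=\tilde R$ is not established; your fallback of verifying the maximality axioms for $\tilde R$ directly and invoking uniqueness from Theorem \ref{thm:robrepint} is viable in principle but buries the same analytical work in checking Conditions \ref{cond:max02} and \ref{cond:max03}.
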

\begin{proof}
    Define $\chi(x,z)=0$ if $z \in F(x)$ and $\infty$ otherwise.
    Since $\set{(x,z)\in X\times Z\colon  \chi(x,z)\leq 0}=\epi F$, it follows that $\chi$ is jointly lower semi-continuous and jointly convex.
    Furthermore, since $F$ is proper it follows that $\chi$ is jointly proper.
    Finally, $\chi$ is decreasing in $z$.
    Hence
    \begin{align*}
        F(x)=\Set{z \in Z\colon  \chi(x,z)\leq 0}&=\Set{z \in Z\colon  \sup_{x^\ast \in X^\ast}\sup_{z^\ast \in K^\circ}\Set{\langle x^\ast, x\rangle +\langle - z^\ast, z\rangle -\chi^\ast\left( x^\ast, -z^\ast \right)}\leq 0}\\
        &=\sup_{x^\ast \in X^\ast}\sup_{z^\ast \in K^\circ}\Set{z \in Z\colon \langle x^\ast, x\rangle- \chi^\ast(x^\ast, - z^\ast) \leq \langle z^\ast,z\rangle }\\
        &=\sup_{x^\ast \in X^\ast}\sup_{z^\ast \in K^\circ}\Set{\Set{z \in Z\colon  \langle z^\ast,z\rangle \geq  -\chi^\ast(x^\ast,- z^\ast)}+S\left(z^\ast, \langle x^\ast, x\rangle \right)}.
    \end{align*}
    Indeed,
    \begin{equation}\label{eq:globu}
        \Set{\Set{z \in Z\colon  \langle z^\ast,z\rangle \geq  -\chi^\ast(x^\ast,- z^\ast)}+S\left(z^\ast, \langle x^\ast, x\rangle \right)}\geqslant \Set{z \in Z\colon \langle x^\ast, x\rangle - \chi^\ast(x^\ast,-z^\ast)\leq \langle z^\ast ,z \rangle }.
    \end{equation}
    If $z^\ast=0$ the equality trivially holds.
    For $z^\ast \neq 0$, there exists $z^1\in S(z^\ast,\langle x^\ast,x\rangle )$ such that $\langle z^\ast, z^1\rangle =\langle x^\ast,x\rangle$.
    Hence, for every $z$ in the right hand side of \eqref{eq:globu}, it holds 
    \begin{equation*}
        z^2=z-z^1 \in \Set{z \in Z\colon  \langle z^\ast, z\rangle \geq -\chi^\ast(x^\ast,-z^\ast)}
    \end{equation*}
    which shows the reverse inequality.
    
    Further, $\Set{z \in Z\colon  \langle z^\ast,z\rangle \geq -\chi^\ast(x^\ast, -z^\ast)} \in \mathcal{G}(Z, K)$ for every $x^\ast \in X^\ast$ and $z^\ast \in K^\circ$ and it holds
    \begin{equation*}
        \Set{z \in Z\colon  \langle z^\ast,z\rangle \geq -\chi^\ast(x^\ast, -z^\ast)}=\inf_{x \in X}\Set{F(x)+S(z^\ast, -\langle x^\ast, x\rangle )}.
    \end{equation*}
    Indeed, on the first hand, for $z_1 \in S(z^\ast, -\langle x^\ast, x\rangle )$ and $z_2 \in  F(x)$, it follows that $\langle z^\ast, z_1+z_2\rangle \geq -\langle x^\ast,x\rangle +\langle z^\ast, z_2\rangle+\chi(x,z_2)\geq -\chi^\ast(x^\ast, -z^\ast)$ so that
    \begin{equation}\label{eq:globi}
        \inf_{x \in X}\Set{F(x)+S(z^\ast, -\langle x^\ast, x\rangle )}\geqslant \set{z \in Z\colon  \langle z^\ast,z\rangle \geq -\chi^\ast(x^\ast, -z^\ast)}.
    \end{equation}
    On the other hand, if $z^\ast=0$ then equality is direct.
    If $z^\ast \neq 0$, let $z$ be such that $\langle z^\ast, z\rangle > -\chi^\ast(x^\ast, -z^\ast)$.
    We can find $x \in X$ and $z_2 \in F(x)$ such that $\langle z^\ast, z\rangle \geq -\langle x^\ast, x\rangle +\langle z^\ast, z_2\rangle$.
    Hence $z^1=z-z^2 \in S(z^\ast, -\langle x^\ast, x\rangle)$.
    From $\inf_{x \in X }\set{F(x)+S(z^\ast, -\langle x^\ast, x\rangle)}$ being closed the reverse inequality in \eqref{eq:globi} follows.
    
    As for the uniqueness, let $R$ be the unique maximal risk function corresponding to $F$ according to Theorem \ref{thm:robrepint} and define $\tilde{R}(x^\ast,s):= \sup_{z^\ast \in K^\circ}\Set{-G(x^\ast,z^\ast)+S(z^\ast,s)} \in \mathcal{G}(Z,K)$.
    The previous argumentation shows in particular that
    \begin{align*}
        \tilde{R}(x^\ast, s)&=\sup_{z^\ast \in K^\circ}\Set{z \in Z\colon  \langle z^\ast,z\rangle \geq s-\chi^\ast(x^\ast,-z^\ast)}\\
        &=\Set{z \in Z\colon  s\leq \inf_{z^\ast \in K^\circ} \Set{\langle z^\ast, z\rangle +\chi^\ast(x^\ast, -z^\ast}}.
    \end{align*}
    Inspection shows that $\tilde{R}(x^\ast, \cdot)$ is left-continuous and increasing.
    Fix $x^\ast \in X^\ast$ such that $-G(x^\ast, \tilde{z}^\ast)\neq \emptyset$ for some $\tilde{z}^\ast \in K^\circ \setminus \set{0}$ which by definition of $G$ means that $\chi^\ast(x^\ast, -\tilde{z}^\ast)<\infty$.
    Now, showing that $\tilde{R}(x^\ast, \cdot)=R(x^\ast, \cdot)$ is equivalent to the fact that $\tilde{\alpha}(x^\ast, \cdot)=\alpha(x^\ast, \cdot)$ where $\tilde{\alpha}$ and $\alpha$ are the inverse in the second argument of $\tilde{R}$ and $R$ respectively.
    According to Proposition \ref{prop:minpen}, $\alpha(x^\ast, z)=\inf_{\tilde{z}>z} \sup_{x \in F^{-1}(\tilde{z})}\langle x^\ast, x\rangle$.
    However, the definition of the inverse and the fact that $\tilde{z}\mapsto -\chi(x,\tilde{z})$ is increasing yield
    \begin{equation}\label{eq:uoapala}
        \begin{split}
            \tilde{\alpha}(x^\ast,z)=&\Set{s \in \mathbb{R}\colon  z \in R(x^\ast,s)}\\
            =&\Set{s \in \mathbb{R}\colon  \inf_{z^\ast \in K^\circ}\sup_{\tilde{z}\in Z}\sup_{x \in X} \Set{\langle x^\ast, x\rangle +\langle z^\ast, z-\tilde{z}\rangle -\chi(x,\tilde{z})}\geq s}\\
            \eqslantgtr & \Set{s \in \mathbb{R}\colon  \sup_{x \in X}\sup_{\tilde{z}\in Z}\inf_{z^\ast \in K^\circ} \Set{\langle x^\ast, x\rangle+\langle z^\ast, z-\tilde{z}\rangle -\chi(x,\tilde{z})}\geq s}\\
            =&\Set{s \in \mathbb{R}\colon  \sup_{x \in X}\sup_{\tilde{z}\leq z}\Set{\langle x^\ast, x\rangle -\chi(x,\tilde{z})}\geq s}=\Set{s \in \mathbb{R}\colon  \sup_{x \in X} \Set{\langle x^\ast, x\rangle -\chi(x,z)}\geq s}\\
            =&\Set{s \in \mathbb{R}\colon  \sup_{x \in F^{-1}(z)}\langle x^\ast, x\rangle \geq s}=\alpha_{F^{-1}(z)}(x^\ast).
        \end{split}
    \end{equation}
    Hence, $\tilde{\alpha}$ and $\alpha$ being right-continuous, it follows that $\tilde{\alpha}(x^\ast, \cdot)\eqslantgtr \alpha(x^\ast, \cdot)$.
    By means of Relation \eqref{eq:LFRF02}, it is enough to show that $\alpha_{F^{-1}(z_1)}(x^\ast)\eqslantgtr \alpha(x^\ast, z_2)$ as soon as $z_1>z_2$.
    We therefore fix $z_1> z_2$.
    For every $z^\ast\in K^\circ$, since $z \mapsto \langle x^\ast, x\rangle -\chi(x,z)$ is increasing, it holds
    \begin{align*}
        \langle z^\ast,z_1-z_2\rangle +\sup_{x \in F^{-1}(z_1)}\langle x^\ast, x\rangle &= \langle z^\ast, z_1-z_2\rangle +\sup_{x \in X}\Set{\langle x^\ast, x\rangle -\chi(x,z_1)}\\
        &= \langle z^\ast, z_1-z_2\rangle +\sup_{x \in X}\sup_{z \leq z_1}\Set{\langle x^\ast, x\rangle -\chi(x,z)}\\
        &\geq \sup_{x \in X}\sup_{z_2-(z_1-z_2)\leq z\leq z_1}\Set{ \langle x^\ast, x\rangle +\langle z^\ast , z_2-z\rangle -\chi(x,z)}.
    \end{align*}
    But for $\tilde{z}^\ast \in K^\circ\setminus \set{0}$ we have $\chi^\ast(x^\ast, -\tilde{z}^\ast)<\infty$, and so it holds
    \begin{align*}
        &\inf_{\lambda>0}\Set{\langle \lambda \tilde{z}^\ast, z_2\rangle +\chi^\ast(x^\ast, - \lambda \tilde{z}^\ast)} = \inf_{\lambda>0}\sup_{x \in X}\sup_{z \in Z}\Set{\langle x^\ast, x\rangle +\langle \lambda \tilde{z}^\ast, z_2-z\rangle-\chi(x,z) }\\
        &\qquad \qquad \leq \sup_{x \in F^{-1}(z_1)}\langle x^\ast, x\rangle +\inf_{\lambda >0} \sup_{x \in X}\sup_{z\leq z_2-(z_1-z_2)}\Set{\langle x^\ast, x\rangle +\langle \lambda \tilde{z}^\ast, z_2-z\rangle-\chi(x,z) }\\
        &\qquad \qquad\qquad \qquad +\inf_{\lambda>0}\sup_{x \in X}\sup_{z_1\leq z }\Set{\langle x^\ast, x\rangle +\langle \lambda \tilde{z}^\ast, z_2-z\rangle-\chi(x,z) }\\
        &\qquad \qquad \leq \sup_{x \in F^{-1}(z_1)}\langle x^\ast, x\rangle +\inf_{\lambda >0} \lambda \Set{\langle\tilde{z}^\ast, z_2\rangle +\sup_{x \in X}\sup_{z\in Z}\Set{\langle x^\ast, x\rangle +\langle  \tilde{z}^\ast, -z\rangle-\chi(x,z) }}\\
        &\qquad \qquad\qquad \qquad     +\inf_{\lambda>0}\lambda \Set{\langle z^\ast, z_2\rangle +\sup_{x \in X}\sup_{ z\in Z }\Set{\langle x^\ast, x\rangle +\langle \tilde{z}^\ast, -z\rangle-\chi(x,z) }}\\
        &\qquad \qquad     \leq \sup_{x \in F^{-1}(z_1)}\langle x^\ast, x\rangle +2\inf_{\lambda >0} \lambda \Set{\langle \tilde{z}^\ast, z_2\rangle +\chi^\ast(x^\ast, -z^\ast)}=\sup_{x \in F^{-1}(z_1)}\langle x^\ast, x\rangle.
    \end{align*}
    Hence, $\alpha_{F^{-1}}(z_1)\eqslantgtr \tilde{\alpha}(x^\ast, z_2)$, showing that $\tilde{\alpha}(x^\ast,\cdot)=\alpha(x^\ast, \cdot)$.
    Thus $\tilde{R}(x^\ast, \cdot)=R(x^\ast,\cdot)$ for every $x^\ast \in X^\ast$ such that $G(x^\ast, z^\ast)\neq Z$ for some $z^\ast \in K^\circ \setminus{0}$.
\end{proof}
\begin{remark}
    Even if $G$ characterizes the maximal risk function on the set of those $x^\ast \in X^\ast$ for which $G(x^\ast,z^\ast)\neq Z$ for some $z^\ast \neq 0$, it yields in general a strictly smaller risk function on---for the representation irrelevant---set of those $x^\ast \in X^\ast$ satisfying $G(x^\ast, z^\ast)=Z$ for all $z^\ast \in K^\circ \setminus \set{0}$.
    This is also true in the scalar case as the following example shows.
    Let $F(x)=f(x)+\mathbb{R}_+$ where $f(x)=x\vee 0$, with $x \in \mathbb{R}$.
    Straightforward computation shows that $\chi(-1,-z^\ast)=\infty$ for all $z^\ast \in \mathbb{R}_+$.
    Hence,
    \begin{equation*}
        \tilde{R}(-1,s)=\sup_{z^\ast \in \mathbb{R}_+}\Set{T\left( z^\ast, s \right)-G(-1,z^\ast)}=\mathbb{R}.
    \end{equation*}
    However, the maximal risk function $R(-1,s)$ is the inverse of the right continuous version of $z \mapsto \sup_{x \in F^{-1}(z)} - x-\mathbb{R}_+$.
    Hence, $\sup_{x \in F^{-1}(z)} - x=\sup_{x \in \emptyset} -x=-\infty$ for every $z<0$, whereas $\sup_{x \in F^{-1}(z)} - x=+\infty$ for all $z\geq 0$.
    This shows 
    \begin{equation*}
        R(-1,s)=\mathbb{R}_+
    \end{equation*}
    which is strictly greater than $\tilde{R}(-1,s)=\mathbb{R}$ for all $s \in \mathbb{R}$.
\end{remark}

%\begin{appendix}
%\input{appendix01}
%\end{appendix}
\bibliographystyle{abbrvnat}
\bibliography{biblio}
\end{document}